\newtheorem{prop}{Proposition}[section]
\newtheorem{df}[prop]{Definition}
\newtheorem{lemma}[prop]{Lemma}
\newtheorem{thm}[prop]{Theorem}
\newtheorem{cor}[prop]{Corollary}
\newcommand{\II}{I\hspace{-0.1cm}I}
\newcommand{\III}{I\hspace{-0.1cm}I\hspace{-0.1cm}I}
\newcommand{\dr}{\partial}
\newcommand{\be}{\begin{eqnarray}}
\newcommand{\ee}{\end{eqnarray}}
\newcommand{\C}{{\mathbb C}}
\newcommand{\R}{{\mathbb R}}
\newcommand{\Z}{{\mathbb Z}}
\newcommand{\CP}{\mathcal{CP}}
\newcommand{\ML}{\mathcal{ML}}
\newcommand{\cG}{{\mathcal G}}
\newcommand{\cH}{{\mathcal H}}
\newcommand{\cS}{{\mathcal S}}
\newcommand{\cT}{{\mathcal T}}
\newcommand{\ld}{\dot{l}}
\newcommand{\md}{\dot{m}}
\newcommand{\ud}{\dot{u}}
\newcommand{\lb}{\overline{l}}
\newcommand{\Vb}{\overline{V}}
\begin{document}

\title{A symplectic map between hyperbolic and complex Teichm\"uller theory}
\date{v2: January 2009}
\author{Kirill Krasnov
~and Jean-Marc Schlenker
}
\address{K.K.: School of Mathematical Sciences, University of
    Nottingham, Nottingham, NG7 2RD, UK} 
\address{J.-M. S.:
Institut de Math{\'e}matiques de Toulouse, UMR CNRS 5219,
Universit{\'e} Toulouse III,
31062 Toulouse cedex 9, France.
\texttt{http://www.math.univ-toulouse.fr/\~{ }schlenker}.
}
\thanks{J.-M.S. was partially supported by the ANR program Repsurf : ANR-06-BLAN-0311.}

\begin{abstract} 
Let $S$ be a closed, orientable surface of genus at least $2$. The space 
$\cT_H\times \ML$, where $\cT_H$ is the ``hyperbolic'' Teichm\"uller space of $S$ and $\ML$ is
the space of measured geodesic laminations on $S$ is naturally a real symplectic manifold. 
The space $\CP$ of complex projective structures on $S$ is a complex symplectic manifold. 
A relation between these spaces is provided by the Thurston's grafting map $Gr$. 
We prove that this map, although not smooth, is symplectic. The proof uses a
variant of the renormalized volume defined for hyperbolic ends.
\end{abstract}

\maketitle

\section{Introduction and main results}

In all the paper $S$ is a closed, orientable surface of genus $g$ at
least $2$, $\cT$ is the Teichm\"uller space of $S$, $\CP$ the
space of complex projective structures on $S$, and $\ML$ be the
space of measured laminations on $S$. 

\subsection{The ``hyperbolic'' Teichm\"uller space}

There are several quite distinct ways to define the Teichm\"uller space of
$S$, e.g., as the space of complex structures on $S$, or as a space of (particular)
representations of $\pi_1(S)$ in $PSL(2,\R)$ (modulo conjugation). 
In this subsection we consider 
what can be called the ``hyperbolic'' Teichm\"uller space, defined as the space of 
hyperbolic metrics on $S$, considered up to isotopy.
In this guise it is sometimes called the Fricke space of $S$. Here we denote this
space by $\cT_H$ to remember its ``hyperbolic'' nature. This description emphasizes
geometric properties of $\cT$, while some other properties, notably the
complex structure on $\cT$, remain silent in it.

There is a natural identification between $\cT_H \times \ML$ and the 
cotangent bundle of $\cT_H$, which can be defined as follows.
Let $l\in \ML$ be a measured lamination on $S$. For each hyperbolic
metric $m\in \cT_H$ on $S$, let $L_m(l)$ be the geodesic length of $l$
for $m$ (as studied e.g. in \cite{kerckhoff,wolpert-length}). 
Thus $m\mapsto L_m(l)$ is a function on $\cT$, which is differentiable.
For $m_0\in \cT_H$, the differential of $m\mapsto L_m(l)$ at $m_0$ is
a vector in $T^*_{m_0}\cT_H$, which we call $\delta(m,l)$. This defines
a function $\delta:\cT_H\times \ML\rightarrow T^*\cT_H$, which is the identification
we wish to use here. It is proved in section 2 (see Lemma \ref{lm:delta})
that $\delta$ is indeed one-to-one (this fact should be quite obvious to
the specialists, a proof is included here for completeness). It was proved by 
Bonahon (see \cite{bonahon-toulouse}) that $\delta$ is tangentiable, that is, the image by 
$\delta$ of a tangent vector is well-defined. Moreover the tangent map is invertible
at each point.

Let $\omega_H$ denote the cotangent symplectic structure on $T^*\cT_H$. The map $\delta$
can be used to pull-back to $\omega_H$ to $\cT_H\times \ML$, therefore making 
$\cT_H\times \ML$ into a symplectic manifold. Somewhat abusing notations, we call
$\omega_H$ again the symplectic form on $\cT_H\times \ML$ obtained in this manner. A
more involved but explicit description on $\omega_H$ on $\cT_H\times \ML$ is recalled below
in subsection \ref{ssc:intersection}.

Note that the identification $\delta$ between $\cT_H\times \ML$ and $T^*\cT_H$ is 
{\it not} identical with the better known identification,
which goes through measured foliations and quadratic differentials, see e.g. \cite{FLP}. 

\subsection{The ``complex'' Teichm\"uller space and complex projective structures}

We now consider the ``complex'' Teichm\"uller space of $S$, denoted here by 
$\cT_C$, which is the space of complex structures on $S$. Of course there is a 
canonical identification between $\cT_H$ and $\cT_C$ --- there is a unique hyperbolic 
metric in each conformal class on $S$. However, as this map is not 
explicit, it appears helpful to keep in mind the distinction between
the two viewpoints. Note that the term ``complex'' could be used here in two different,
albeit related meanings. One is the above definition of $\cT_C$ as the space of complex 
structures on $S$. The other is related to the well-known deformation theory
of $\cT_C$ in terms of Beltrami differentials. It is in this point of view that
the complex structure on $\cT_C$ becomes manifest. So, it is useful to
keep in mind that the ``complex'' refers both to the complex structures on $S$
and on $\cT_C$ itself.

Now let $\CP$ denote the space of (equivalence classes of) $\C P^1$-structures 
(or complex projective structures) on $S$. 
Recall that a (complex) projective structure on $S$ is an maximal atlas of
charts from $S$ into $\C P^1$ such that all transition maps are 
M\"obius transformations. Such a structure naturally yields a holonomy
representation $hol: \pi_1(S) \to {\rm PSL}(2,\C)$, as well as an
$hol(\pi_1(S))$-equivariant developing map $dev:\tilde{S} \to \C P^1$.

There is a natural relation between complex projective structures
on $S$ and complex structures along with a holomorphic 
quadratic differential on $S$. Thus, let $\sigma$ be a complex projective structure on $S$, 
and let $\sigma_0$ be the ``Fuchsian'' $\C P^1$-structure on $S$ obtained by 
the Fuchsian uniformization of the complex structure underlying $\sigma$. Then 
the Schwarzian derivative of the complex map from $(S,\sigma_0)$ to $(S,\sigma)$ is 
a quadratic differential $q$ on $S$, holomorphic with respect to the
conformal structure $c$ of both $\sigma$ and $\sigma_0$, and the map sending 
$\sigma$ to $(c,q)$ is a homeomorphism, see e.g. \cite{dumas-survey,McMullen}. 

Recall also that the space of couples $(c,q)$ where $c$ is a complex structure on 
$S$ and $q$ is a quadratic holomorphic differential on $(S,c)$ is naturally
identified with the complexified cotangent bundle of $\cT_C$, see e.g. \cite{ahlfors}. 
Thus, $\CP$ is naturally a complex symplectic manifold. We denote the 
associated (complex) symplectic form by $\omega_\C$, and its real part by $\omega_C$. 

An equivalent way to describe the complex symplectic structure on $\CP$ is via the 
holonomy representation. This viewpoint naturally leads to a complex symplectic structure
on $\CP$, see \cite{goldman-symplectic}, defined in terms of the cup-product of two 
1-cohomology classes on $S$ with values in the appropriate Lie algebra bundle over $S$.
We call this complex symplectic structure $\omega_G$ here. 
The fact that this complex symplectic structure is the same (up to a constant) as $\omega_{\C}$ 
was established by Kawai \cite{kawai}.

Note that Kawai \cite{kawai} uses another way to associate a holomorphic quadratic
differential to a complex projective structure on $S$, using as a ``reference
point'' a complex projective structure given by the simultaneous uniformization 
(Bers slice) instead of the Fuchsian structure $\sigma_0$. This identification is
not as canonical as the one above, as it depends on a chosen reference conformal
structure needed for the simultaneous uniformization. It turns out that the symplectic structure 
obtained in this way on $\CP$ is independent of the reference point and is the same as the 
one coming from the above construction using the Fuchsian projective structure $\sigma_0$,
see Lemma \ref{lm:fuchsian-bers}.

\subsection{The grafting map}

The ``hyperbolic'' and the ``complex'' descriptions of Teichm\"uller space
behave differently in some key aspects, and it is interesting to understand
the relation between them. Such a relation is given by the well-known grafting map 
$Gr:\cT_H\times \ML\mapsto \CP$.

The grafting map is defined as follows.
When $m\in \cT_H$ is a hyperbolic metric and $l\in \ML$ is a weighted multicurve, 
$Gr_l(m)$ can be obtained by ``cutting'' $(S,m)$ open along the leaves of $l$, gluing in 
each cut a flat cylinder of width equal to the weight of the curve in $l$, and considering 
the complex projective structure underlying this metric. This map extends by continuity 
from weighted multicurves to measured laminations, a fact discovered by Thurston, 
see e.g. \cite{dumas-survey}. Out of $Gr$ one can obtain a map from the
Teichm\"uller space to itself by fixing a measured lamination $l\in\ML$ and reading the conformal
structure underlying $Gr_l(m)$, this map is known to be a homeomorphism,
see \cite{scannell-wolf}. Grafting on a fixed hyperbolic surface also defines a 
homeomorphism between $\ML$ and $\cT$, see \cite{dumas-wolf}.

It is possible to compose $\delta^{-1}:T^*\cT_H\rightarrow \cT_H\times \ML$ with the grafting
map $Gr:\cT_H\times \ML\rightarrow \CP$. The resulting map between smooth manifolds is
tangentiable by the results mentioned above, but turns out to be smoother.

\begin{lemma} \label{lm:C1}
$Gr\circ \delta^{-1}:T^*\cT_H\rightarrow \CP$ is $C^1$.
\end{lemma}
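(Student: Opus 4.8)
The plan is to use that, as recalled above, both $\delta$ and $Gr$ are tangentiable with everywhere invertible tangent maps. Tangentiable maps compose, with the expected chain rule for tangent maps; hence $Gr\circ\delta^{-1}$ is tangentiable, and since a tangentiable map whose tangent map is everywhere linear and varies continuously with the base point is $C^1$, it is enough to prove these two properties for $T(Gr\circ\delta^{-1})$. A priori this tangent map is only positively homogeneous: the tangent maps of $\delta$ and of $Gr$ both fail to be linear, and for the same reason --- both $(m,l)\mapsto L_m(l)$ (entering the definition of $\delta$) and the grafting map are only tangentiable rather than $C^1$, with the failure of smoothness occurring in the lamination variable. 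The whole point is that these two failures of linearity match and cancel when one forms $TGr\circ(T\delta)^{-1}$.

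To exhibit the matching I would write down, at a point $(m_0,l_0)$ and for a deformation $(\dot m,\dot l)$, the first-order variations of $\delta$ and of $Gr$. For $\delta$ one has, schematically,
\[ T_{(m_0,l_0)}\delta\,(\dot m,\dot l) \;=\; \big(\,\dot m,\; \hess_{m_0}\!\big(L_\bullet(l_0)\big)\,\dot m + \Theta(m_0,l_0)\,\dot l\,\big)\in T_{m_0}\cT_H\oplus T^*_{m_0}\cT_H, \]
where the first term is linear and smooth in $(m_0,\dot m)$ while $\Theta(m_0,l_0)\dot l$, the variation of $\delta$ along $\ML$, is the piece that is only positively homogeneous in $\dot l$. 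For $Gr$, using Thurston's realisation of $Gr_l(m)$ as the $\C P^1$-structure at infinity of the hyperbolic end bounded by the pleated surface with induced metric $m$ bent along $l$, together with Bonahon's transverse cocycle (shear--bend) description of that structure, one gets
\[ T_{(m_0,l_0)}Gr\,(\dot m,\dot l) \;=\; A(m_0,l_0)\,\dot m \;+\; B(m_0,l_0)\,\dot l , \]
with $A$ linear and smooth in $(m_0,\dot m)$ and $B(m_0,l_0)\dot l$ the ``bending term'', again only positively homogeneous in $\dot l$. The key claim is that there is a linear map $P(u_0)$, depending smoothly on $u_0=\delta(m_0,l_0)$, with
\[ T_{(m_0,l_0)}Gr \;=\; P(u_0)\circ T_{(m_0,l_0)}\delta ; \]
that is, the bending term is a fixed linear image of the variation $\Theta(m_0,l_0)\dot l$ of $\delta$ along $\ML$, and the residual $\dot m$-terms are matched accordingly. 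Granting this, $T_{u_0}(Gr\circ\delta^{-1})=P(u_0)$, which is linear and varies continuously with $u_0$, so $Gr\circ\delta^{-1}$ is $C^1$.

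The crux, and the step I expect to be the main obstacle, is establishing this factorisation uniformly over the non-smooth locus of $\ML$ (weighted multicurves, laminations with closed leaves, and so on), where the combinatorial type of the supporting train track changes. I would first check it directly when $l_0$ is a weighted multicurve: there $Gr$ inserts flat cylinders and $\delta$, the relevant length functions and all their variations --- including $\Theta(m_0,l_0)\dot l$ via the cosine formula --- are completely explicit, so the identity $T Gr=P(u_0)\circ T\delta$ can be verified by hand. One then passes to a general $l_0$ by continuity, using that weighted multicurves are dense in $\ML$ and that the (positively homogeneous) tangent maps of $Gr$ and of $\delta$, and the candidate linear map $P$, all vary continuously. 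A more conceptual route to the same cancellation, and the one used later in the paper, runs through the renormalized volume $W$ of hyperbolic ends: computing its differential once in terms of the complex projective data and once in terms of $(m,l)$ brings out directly the length term responsible for the matching.
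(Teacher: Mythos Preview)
Your overall framework---show that the tangent map of $Gr\circ\delta^{-1}$ is linear and varies continuously with the base point---is the right one, and it is what the paper does. But the mechanism you propose for establishing the factorisation $TGr=P(u_0)\circ T\delta$ has a real gap.

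The density argument is circular. You want to verify the identity at weighted multicurves and then pass to the limit ``using that \dots\ the candidate linear map $P$ all vary continuously''. But continuity of $P$ is precisely the content of the $C^1$ statement you are trying to prove; you have no independent definition of $P$ at a general $l_0$ against which to check continuity. Tangent maps of merely tangentiable maps need not vary continuously---indeed $T\delta$ and $TGr$ jump with the combinatorial type of the supporting lamination---so knowing the identity on a dense set says nothing at a limit point without further input.

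What actually forces the cancellation, and what is missing from your sketch, is Bonahon's complex-linearity theorem: in shear--bend coordinates on a fixed maximal lamination $\lambda$, the map $\phi_\lambda:\cH(\lambda,\C)\to T_\sigma\CP$ is $\C$-linear for the complex structure on $\CP$. Combined with the Wolpert--Bonahon duality $dL_\cdot(h_1)=\omega_{WP}(\cdot,e_{h_1})$, this yields the explicit formula
\[
(\phi_\lambda\circ\psi_\lambda^{-1})(0,\dot u)\;=\;i\,dGr_l(\dot u^{*}),
\]
where $\dot u^{*}$ is the Weil--Petersson dual of $\dot u$. The right-hand side is visibly independent of the auxiliary lamination $\lambda$ (since, by another result of Bonahon, $Gr_l$ is $C^1$ in $m$ for fixed $l$), and this is what pins down $P(u_0)$ at \emph{every} point and gives its continuity. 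Your cosine-formula check at multicurves would recover special cases of this identity but cannot supply the $\lambda$-independence at a general $l_0$.

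Finally, the renormalized-volume route you mention does not yield $C^1$ regularity. The first-variation formula for the relative volume (Proposition~\ref{pr:relative}) relates the two Liouville forms and is used to prove the symplectic statement \emph{after} Lemma~\ref{lm:C1} has been established so that the pull-back makes sense; invoking it here would reverse the logical order.
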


The proof is in section 2.

Our main result
in this paper is that this composed map is symplectic. This can be stated as follows, 
using the symplectic structure induced on $\cT\times \ML$ by $\omega_H$.

\begin{thm} \label{tm:main}
The pullback of the symplectic form $\omega_C$ on $\CP$ by the grafting map is the 
form $\omega_H$ on $\cT_H\times \ML$, up to a factor of $2$: $Gr^*\omega_C=2\omega_H$. 
\end{thm}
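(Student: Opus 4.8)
The plan is to reduce the statement to a computation on a \emph{hyperbolic end}, exploiting the renormalized volume as a generating function. A complex projective structure $\sigma \in \CP$ determines a hyperbolic end $E_\sigma$: the quotient of a domain of discontinuity side of the quasi-Fuchsian-like holonomy, or more precisely the maximal hyperbolic end whose conformal boundary at infinity carries $\sigma$. Such an end has a well-defined \emph{bending lamination} $l$ on its pleated boundary and an induced hyperbolic metric $m$ on that boundary, and Thurston's theorem identifies $(m,l) \in \cT_H \times \ML$ with $Gr_l(m) = \sigma$. So the grafting map is encoded in the geometry of hyperbolic ends, and the two symplectic forms $\omega_H$ and $\omega_C$ become, respectively, a form built from the pleated boundary data and a form built from the conformal boundary data of the \emph{same} end. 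First I would set up the renormalized volume $W$ for hyperbolic ends, carefully subtracting the divergent term using the conformal boundary metric, and establish its first variation formula (a Polyakov-type / Schläfli-type identity): the variation of $W$ should pair the variation of the conformal boundary data against the holomorphic quadratic differential $q$ (the Schwarzian), while simultaneously pairing the variation of the pleated-boundary data $(m,l)$ against the ``dual'' bending measure. This is the analogue of the known first-variation formulas for the renormalized volume of quasi-Fuchsian manifolds (Krasnov–Schlenker, Takhtajan–Teo, Takhtajan–Zograf).

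The key steps, in order: (1) Define $W$ on the space of hyperbolic ends and prove it is $C^1$ where needed, using Lemma~\ref{lm:C1} to transfer regularity; the function $W$ will be a function on (a cover of) $\CP$, or equivalently on $\cT_H \times \ML$ via $Gr$. (2) Prove the variation formula for $W$ on the conformal-boundary side: $dW = \frac{1}{?}\,\mathrm{Re}\langle q, \dot\mu\rangle + (\text{exact/boundary terms})$, so that $W$ is essentially a primitive for the tautological $1$-form on $T^*\cT_C$ pulled back along $\sigma \mapsto (c,q)$. (3) Prove the variation formula for $W$ on the pleated-boundary side: here the first variation of the volume of a hyperbolic end bounded by a pleated surface is governed by a Schläfli formula, giving $dW$ in terms of the length of the bending lamination and the variation of the induced metric, i.e. in terms of $\delta(m,l)$. (4) Combine: since both computations compute the same $dW$, equating them and using the definition of $\omega_H$ as $\delta^*\omega_H^{\mathrm{can}}$ and of $\omega_C$ as $\mathrm{Re}$ of the cotangent form on $T^*\cT_C$, differentiating once more (taking $d$ of the two expressions for the $1$-form $dW$, which gives $0=0$, so more carefully: writing $dW = \alpha_C = \alpha_H$ where $\alpha_C$ pulls back the $T^*\cT_C$ tautological form and $\alpha_H$ the $T^*\cT_H$ one, then $d\alpha_C = Gr^*\omega_C$ up to sign and $d\alpha_H = 2\omega_H$ up to sign) yields $Gr^*\omega_C = 2\omega_H$, with the factor $2$ emerging from the relation between $W$ for an end and the renormalized volume of a two-sided manifold, or from the standard factor-of-two in the Schwarzian normalization.

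There are two genuine obstacles. The first, and main one, is \textbf{regularity}: the grafting map is not smooth (only $C^1$ by Lemma~\ref{lm:C1}, and only tangentiable before composing with $\delta^{-1}$), so the renormalized volume $W$ is a priori only $C^1$, and one cannot naively take its exterior derivative twice. The resolution I would pursue is to first prove the identity on the dense open set where everything is real-analytic — namely where $l$ is a weighted multicurve, so that $Gr_l(m)$ is an explicit grafting of cylinders and $E_\sigma$ has a pleated boundary with finitely many bending lines — establish $Gr^*\omega_C = 2\omega_H$ there by the generating-function argument, and then pass to the limit using continuity of both sides (continuity of $\omega_H$ is clear; continuity of $Gr^*\omega_C$ requires the $C^1$ statement of Lemma~\ref{lm:C1} so that $Gr^*$ of a continuous $2$-form is continuous). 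The second obstacle is \textbf{bookkeeping of boundary/exact terms and constants}: the Schläfli formula for hyperbolic ends with pleated boundary must be set up so that the bending-lamination term matches exactly $\langle dm, \delta(m,l)\rangle$-type pairings, and the conformal-side variation must be normalized so that the quadratic differential appearing is exactly the Schwarzian $q$ with the convention that makes $(c,q)$ the point of $T^*\cT_C$; getting the factor $2$ right (rather than $1$ or $4$) is where one must be most careful, cross-checking against the known quasi-Fuchsian renormalized-volume formulas and against Lemma~\ref{lm:fuchsian-bers} to ensure the choice of reference projective structure does not affect the answer.
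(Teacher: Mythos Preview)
Your strategy is the paper's: introduce the relative (renormalized) volume $V_R$ of a hyperbolic end and read off the symplectic identity from its first variation. The one point that is muddled is step~(4). There are not two separate computations of $dW$ to be equated; there is a \emph{single} variation formula with two boundary contributions, Proposition~\ref{pr:relative}:
\[
dV_R \;=\; \tfrac{1}{2}\,L'_\mu(\lambda)\;-\;\tfrac{1}{4}\int_{\partial_\infty}\langle {I^*}',\II^*_0\rangle\,da^*
\;=\;\tfrac{1}{2}\,(\delta\circ Gr^{-1})^*\beta_H\;-\;\tfrac{1}{4}\,\beta_C,
\]
and the theorem follows by applying $d$ once to this identity (using $d(dV_R)=0$), not by matching two expressions for $dW$. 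In particular the factor $2$ is not a normalization mystery: it is exactly the ratio $(1/4)/(1/2)$ of the two coefficients above, which in turn come from the dual Bonahon--Schl\"afli formula (Lemma~\ref{lm:bs-dual}) on the pleated side and the Schl\"afli-at-infinity formula (Lemma~\ref{lm:dW}, Lemma~\ref{lm:dVR}) on the conformal side.

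Your proposed detour through weighted multicurves and a density/continuity argument is unnecessary and would in fact be harder than the direct route. The paper never restricts to rational laminations: it proves the variation formula~(\ref{eq:s-rel}) in full generality by combining Bonahon's Schl\"afli formula for pleated boundaries (which already handles arbitrary $\lambda\in\ML$ via transverse H\"older distributions) with the renormalized-volume variation at infinity, splitting the end along an auxiliary polyhedral collar so the two pieces can be treated separately (Lemma~\ref{lm:collar}, Corollary~\ref{cr:collar}, Lemma~\ref{lm:complement}). The $C^1$ regularity of Lemma~\ref{lm:C1} is exactly what makes the pullbacks $(\delta\circ Gr^{-1})^*\beta_H$ and $(\delta\circ Gr^{-1})^*\omega_H$ meaningful and lets you take $d$ of the displayed identity; no approximation step is needed.
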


The meaning of the theorem is easier to see when considering the composed map
$Gr\circ\delta^{-1}:T^*\cT_H\rightarrow \CP$. Since this map is $C^1$ by Lemma
\ref{lm:C1}, one can use it to pull back the symplectic form $\omega_C$.

What the proof shows is that the image by $Gr$ of the
Liouville form of $2\omega_H$ is the Liouville form of $\omega_C$ plus the 
differential of a function. 
Below we shall give an alternative statement
of the above theorem in terms of Lagrangian submanifolds. 

Our proof of Theorem \ref{tm:main} is based on geometrically
finite 3-dimensional hyperbolic ends. We recall this notion in the next subsection.

\subsection{Hyperbolic ends}

\begin{df} \label{df:end}
A {\bf hyperbolic end} is a 3-manifold $M$, homeomorphic to $S\times \R_{> 0}$,
where $S$ is a closed surface of genus at least $2$, endowed with a (non-complete)
hyperbolic metric such that:
\begin{itemize}
\item the metric completion corresponds to $S\times \R_{\geq 0}$, 
\item the metric $g$ extends to a hyperbolic metric in a 
neighbourhood of the boundary, in such a way
that $S\times \{ 0\}$ corresponds to a pleated surface, 
\item $S\times \R_{> 0}$ is concave in the neighbourhood of this boundary.
\end{itemize}
Given such a hyperbolic end, we call $\dr_0M$ the ``metric'' boundary corresponding
to  $S\times \{ 0\}$, and $\dr_\infty M$ the boundary at infinity.
We call $\cG_S$ the space of those hyperbolic ends.
\end{df}

It is simpler to consider a quasifuchsian hyperbolic manifold $N$. The 
complement of its convex core is the disjoint union of two hyperbolic ends.
However a hyperbolic end, as defined above, does not always extend to a
quasifuchsian manifold. Note also that the hyperbolic ends as defined here
are always convex co-compact, so our definition is more restrictive
than others found elsewhere, and the longer name ``convex co-compact
hyperbolic end'' would perhaps be more precise. We do not consider here 
degenerate hyperbolic ends, with an end invariant which is a lamination rather
than a conformal structure; the fact that $S\times \{ 0\}$ is a convex
pleated surface in our definition prevents the other end from being degenerate
at infinity.

There are two natural ways to describe a hyperbolic end, either from the metric
boundary or from the boundary at infinity, both of which are well-known. 
On the metric boundary side, $\dr_0M$
has an induced metric $m$ which is hyperbolic, and is pleated along a measured
lamination $l$. It is well known that $m$ and $l$ uniquely determine $M$, see 
e.g. \cite{dumas-survey}. 

In addition, $\dr_\infty M$ carries naturally a complex projective structure,
$\sigma$,
because it is locally modelled on the boundary at infinity of $H^3$ and
hyperbolic isometries act at infinity by M\"obius transformations. This
complex projective structure has an underlying conformal structure, $c$.
Moreover the construction described above assigns to 
$\dr_\infty M$ a quadratic holomorphic differential $q$, which is none other
than the Schwartzian derivative of the complex map from $(S,\sigma_0)$
to $(S,\sigma)$. It follows from Thurston's original construction of the
grafting map that $\sigma=Gr_l(m)$.

\subsection{Convex cores}

Before we describe how the above hyperbolic ends can be of any use for
the questions considered in this paper, let us consider what is
perhaps a more familiar situation. Thus, consider a hyperbolic 3-manifold with 
boundary $N$, which admits a 
convex co-compact hyperbolic metric. We call $\cG(N)$ the space of such convex
co-compact hyperbolic metrics on $N$. Let $g\in \cG$, then $(N,g)$ 
contains a smallest non-empty subset $K$ which
is geodesically convex (any geodesic segment with endpoints in $K$
is contained in $K$), its convex core, denoted here by $CC(N)$. 
$CC(N)$ is then homeomorphic 
to $N$, its boundary is the disjoint union of closed pleated surfaces, 
each of which has an induced metric which is hyperbolic, and each is
pleated along a measured geodesic lamination, see e.g. \cite{epstein-marden}. 
So we obtain a map
$$ i':\cG(N)\rightarrow \cT_H(\dr N)\times \ML(\dr N)~. $$
Composing $i'$ with the identification $\delta$ between $\cT_H\times \ML$
and $T^*\cT_H$, we obtain an injective map
$$ i : \cG(N)\rightarrow T^*\cT_H(\dr N)~. $$
 
\begin{thm} \label{tm:cc}
$i(\cG(N))$ is a Lagrangian submanifold of $(T^*\cT_H(\dr N),\omega_H)$.
\end{thm}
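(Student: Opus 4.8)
The plan is to identify $\cG(N)$ with a suitable subspace of the space of hyperbolic ends, and to use the dual pair of parametrizations — from the metric boundary, giving $(m,l)\in\cT_H\times\ML$, and from the boundary at infinity, giving the complex projective structure $\sigma$ together with its holomorphic quadratic differential $q$ — to realize $i(\cG(N))$ as the graph of an exact $1$-form. Concretely, for a convex co-compact metric $g$ on $N$ the boundary $\partial N$ decomposes into components and each hyperbolic end outside the convex core is an element of $\cG_S$; the map $i'$ records the induced metrics and bending laminations on $\partial CC(N)$, which under $\delta$ gives $i(g)\in T^*\cT_H(\partial N)$, i.e. a cotangent vector at the point $m=$ (induced metric on $\partial CC(N)$). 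One knows (Bonahon, and the deformation theory recalled in the introduction) that $\dim_\R\cG(N)=\dim_\R\cT_H(\partial N)$, so it suffices to show $i(\cG(N))$ is isotropic, i.e. the restriction of the canonical $1$-form $\lambda_H$ to $i(\cG(N))$ is closed; in fact I would show it is exact.

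The key computational input is a variational formula for how the bending lamination pairs with variations of the induced metric as one moves in $\cG(N)$. I would introduce the renormalized volume $V_R$ of the convex co-compact manifold $(N,g)$ (or rather its analogue for the hyperbolic ends, as announced in the abstract), and recall the first-variation (Schläfli-type) formula: under a deformation of $g$, $dV_R$ is expressed, on one side, in terms of the data at $\partial_\infty N$ — essentially $\mathrm{Re}\int (q,\dot c)$, the pairing of the Schwarzian $q$ with the variation $\dot c$ of the conformal structure — and on the other side, after passing through the convex core, in terms of $\tfrac12 L_m(\dot l) + \tfrac12 \dot L_{(\cdot)}(l)$-type boundary terms at $\partial_0$, i.e. the pairing $\langle \delta(m,l),\dot m\rangle$. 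The precise statement I want is: along $\cG(N)$, the canonical Liouville $1$-form $\lambda_H = \langle \delta(m,l), d m\rangle$ equals $d(\text{something}) $ — the ``something'' being (a multiple of) $V_R$ plus an explicit functional of the induced metric. Since $d\lambda_H$ restricted to $i(\cG(N))$ is then $d$ of an exact form, it vanishes, so $i(\cG(N))$ is isotropic, hence Lagrangian by the dimension count.

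In more detail the steps are: (1) set up $\cG(N)$, recall that $i'$ is well-defined and smooth (or at least tangentiable), citing \cite{epstein-marden} and Bonahon; (2) recall the dimension count making ``isotropic'' equivalent to ``Lagrangian''; (3) define/recall the renormalized volume $V_R:\cG(N)\to\R$ and its gradient with respect to the conformal structure at infinity, namely that $dV_R$ evaluated on a variation equals $-\tfrac14\mathrm{Re}\int_{\partial_\infty N}(q,\dot c)$ (Krasnov–Schlenker); (4) prove the dual statement that $dV_R$, computed instead via the Schläfli formula for the region between $\partial_0$ and a fixed equidistant surface, produces the boundary integral giving $\tfrac12\langle\delta(m,l),\dot m\rangle$ up to the differential of a function of $m$ alone (this is where the pleated-surface boundary term, the $W$-volume or the integral of the mean curvature, must be handled carefully — the surface $\partial_0$ is only pleated, so one works with the nearby smooth equidistant surfaces and takes a limit); (5) combine (3) and (4): along $i(\cG(N))$ one gets $\lambda_H = d\Phi$ for an explicit $\Phi$, whence $i(\cG(N))$ is exact Lagrangian.

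The main obstacle is step (4): relating the Liouville form $\langle\delta(m,l),dm\rangle$ on $\cT_H\times\ML$ to the renormalized volume. One must cope with the fact that $\partial_0 M$ is a pleated surface, so classical smooth Schläfli formulas do not directly apply; the standard remedy is to use the family of convex equidistant surfaces $S_\rho$ at distance $\rho$ from the convex core, apply the smooth variational formula for the region between $S_\rho$ and $\partial_\infty N$, and let $\rho\to 0$, at which point the contribution of $S_\rho$ converges to a sum of the induced-metric volume form and a term involving the bending lamination — precisely Bonahon's formulas for the derivative of the convex-core volume. Getting the constants right (the factor of $2$ in Theorem \ref{tm:main} ultimately comes from here) and justifying the interchange of limit and derivative are the delicate points; everything else is either bookkeeping or already recorded in the cited literature.
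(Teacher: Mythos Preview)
Your overall strategy --- show that the restriction of the Liouville form $\beta_H$ to $i(\cG(N))$ is exact, hence $\omega_H$ vanishes there, and conclude by the dimension count --- is exactly the paper's strategy. But you have chosen the wrong primitive, and this makes your steps (3)--(5) far more complicated than necessary and, as written, not quite coherent.

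The paper does \emph{not} use the renormalized volume $V_R$ for Theorem~\ref{tm:cc}. Instead it uses the \emph{dual volume of the convex core},
\[
V_C^* \;=\; V_C - \tfrac{1}{2} L_\mu(\lambda),
\]
where $\mu$ is the induced metric and $\lambda$ the bending lamination on $\partial CC(N)$. Bonahon's Schl\"afli formula (Lemma~\ref{lm:bs}) gives $dV_C=\tfrac{1}{2}L_\mu(\lambda')$; a Legendre-type manipulation (Lemma~\ref{lm:bs-dual}, using only that $L_\mu(\lambda)'=L'_\mu(\lambda)+L_\mu(\lambda')$) then yields
\[
dV_C^* \;=\; -\tfrac{1}{2}\, L'_\mu(\lambda) \;=\; -\tfrac{1}{2}\,\beta_H\big|_{i(\cG(N))}.
\]
That is the whole proof: the Liouville form restricted to $i(\cG(N))$ equals $-2\,dV_C^*$, so it is exact. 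No boundary at infinity, no equidistant foliations, no limits $\rho\to 0$.

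Your step (4) is where the confusion lies. The renormalized volume $V_R$ is defined through the boundary at infinity, and its first variation (Lemma~\ref{lm:dVR}) involves only the data $(I^*,\II^*_0)$ there; there is no alternative computation of $dV_R$ ``from the convex core side''. What you seem to be reaching for is a decomposition of $N$ into the convex core and the ends, and then a relative-volume argument for each end --- but that is precisely the machinery of Section~4 used to prove Theorem~\ref{tm:main}, and it would make Theorem~\ref{tm:cc} a corollary of Theorem~\ref{tm:main} together with McMullen's Kleinian reciprocity (Theorem~\ref{tm:mcm}). The paper explicitly notes this route, but gives the direct proof via $V_C^*$ because it is shorter and avoids the detour through infinity. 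Replace $V_R$ by $V_C^*$ and your outline collapses to the paper's two-line argument.
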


The map $i$ is not smooth, but as the map
$\delta$ defined above it is tangentiable (see \cite{bonahon,bonahon2}). 
The natural map from $\cG(N)$ to the space of complex projective structure
on each connected component of the boundary at infinity is smooth, and 
it follows from Theorem \ref{tm:bonahon} that $i$ is $C^1$.

The proof given below shows that
the restriction to $i(\cG(N))$ of the Liouville form of $T^*\cT_H(\dr N)$ is
the differential of a function.

The reason for considering convex cores in our context will become clear in the next
two subsections.

\subsection{Kleinian reciprocity}

There is a direct relationship between Theorem \ref{tm:cc} and Theorem \ref{tm:main},
in that Theorem \ref{tm:cc} can be considered as a corollary of Theorem \ref{tm:main}.
This goes via the so-called ``Kleinian reciprocity'' of McMullen. Thus, 
consider a Kleinian manifold $M$, and let $\cG(M)$ be the space of 
complete convex co-compact hyperbolic metrics on $M$. Each $g\in\cG(M)$ gives rise
to a projective structure on the boundary at infinity
$\partial_\infty M$. This gives an injective map $j:\cG(M)\rightarrow T^*\cT_C(\dr_\infty M)$.

\begin{thm}[McMullen \cite{McMullen}] \label{tm:mcm}
$j(\cG(M))$ is a Lagrangian submanifold of $(T^*\cT_C(\dr_\infty M),\omega_C)$. 
\end{thm}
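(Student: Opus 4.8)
The plan is to realize $j$ as the graph of a closed quadratic-differential-valued $1$-form over Teichm\"uller space, with primitive a constant multiple of the renormalized volume of $M$. First I would invoke the Ahlfors--Bers quasiconformal deformation theorem (see \cite{McMullen}) to identify $\cG(M)$ biholomorphically with $\cT_C(\dr_\infty M)$: a complete convex co-compact hyperbolic metric on $M$ is uniquely determined by, and realizes every prescribed, conformal structure $c$ on $\dr_\infty M$. In particular $\dim_\R\cG(M)=\frac12\dim_\R T^*\cT_C(\dr_\infty M)$, so $j(\cG(M))$ has the dimension of a Lagrangian. Under this identification $j$ becomes a section $s_q:\cT_C(\dr_\infty M)\to T^*\cT_C(\dr_\infty M)\cong\CP(\dr_\infty M)$ sending $c$ to the quadratic differential $q$ on $(\dr_\infty M,c)$ which is the Schwarzian derivative of the developing map of the projective structure at infinity of the hyperbolic structure with conformal boundary $c$. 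Replacing the Fuchsian reference point by a Bers-slice one, relative to which $c\mapsto q$ is holomorphic, changes neither the symplectic form nor the property of being Lagrangian, by Lemma \ref{lm:fuchsian-bers}.

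Next I would reduce the assertion to a closedness statement. Writing $\lambda_\C$ for the complex Liouville $1$-form on $T^*\cT_C(\dr_\infty M)$, one has $s_q^*\lambda_\C=q$, so $s_q^*\omega_\C=dq$, the exterior derivative of the quadratic-differential-valued $1$-form $q$; as $j(\cG(M))=s_q(\cT_C(\dr_\infty M))$ is already half-dimensional, it is Lagrangian for $\omega_\C$, hence for $\omega_C=\mathrm{Re}\,\omega_\C$, as soon as $dq=0$, i.e.\ as soon as $q$ is a closed $1$-form.

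To prove that $q$ is closed I would show it is exact, with primitive the renormalized volume. Let $V_R:\cG(M)\cong\cT_C(\dr_\infty M)\to\R$ be the renormalized volume of the convex co-compact manifold $M$; for such $M$ this coincides with the classical renormalized volume, and is the counterpart for genuine manifolds of the variant for hyperbolic ends that enters the proof of Theorem \ref{tm:main}. The essential input is the first variation formula for the renormalized volume: under a deformation of the conformal structure at infinity, $dV_R=\kappa\,\mathrm{Re}\langle q,\cdot\rangle$ for a universal nonzero real constant $\kappa$, where $q$ is precisely the Schwarzian at infinity above and $\langle\cdot,\cdot\rangle$ is the pairing between tangent vectors to $\cT_C(\dr_\infty M)$ and holomorphic quadratic differentials. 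Thus $\mathrm{Re}\langle q,\cdot\rangle$ is exact; since the section $s_q$ is holomorphic this forces $q$ to be closed, so $dq=0$ and $j(\cG(M))$ is Lagrangian. Moreover the restriction of $\lambda_\C$ to $j(\cG(M))$ is then $\kappa^{-1}dV_R$, an exact form, in parallel with the statements accompanying Theorems \ref{tm:main} and \ref{tm:cc}.

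The main obstacle is the first variation formula for $V_R$ itself: deriving it, identifying the quadratic differential it produces with the Schwarzian at infinity taken relative to the correct reference (where Lemma \ref{lm:fuchsian-bers} intervenes), and pinning down the constant $\kappa$. This rests on a careful analysis of the equidistant foliation of $M$ near $\dr_\infty M$ and of the Fefferman--Graham type asymptotic expansion of the hyperbolic metric, together with an application of Stokes' theorem to the regularized volume integral --- exactly the machinery developed in the rest of the paper. No regularity difficulty arises for $j$ itself, since the Schwarzian at infinity depends holomorphically, in particular smoothly, on the conformal structure at infinity, unlike the maps $\delta$ and $i$ on the hyperbolic side. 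As a cross-check, one could instead argue on the character-variety side: $j$ factors through the restriction of holonomies from $\pi_1(M)$ to $\pi_1(\dr_\infty M)$ (modulo conjugation), whose image is isotropic for the Goldman symplectic form $\omega_G$ by the standard argument that cohomology classes on $\dr_\infty M$ extending over $M$ have vanishing cup-product; since $\omega_G$ agrees with $\omega_\C$ by Kawai \cite{kawai} (see also \cite{goldman-symplectic}), and the image is again half-dimensional, it is Lagrangian.
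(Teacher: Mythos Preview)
Your proposal is correct and follows essentially the same route as the paper: both arguments rest on the first variation formula for the renormalized volume, Lemma~\ref{lm:dVR}, which identifies the restriction of the Liouville form $\beta_C$ to $j(\cG(M))$ with $-4\,dV_R$, so that $\omega_C=d\beta_C$ vanishes there. Your detour through holomorphicity of the Bers-slice section to get closedness of the full complex $1$-form $q$ (hence Lagrangianity for $\omega_\C$) is not needed for the theorem as stated, since $\omega_C=\mathrm{Re}\,\omega_\C$ already vanishes once $\mathrm{Re}\,q=\kappa^{-1}dV_R$ is exact; the paper's two-line proof simply takes $d$ of the variational identity.
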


This statement is quite analogous to Theorem \ref{tm:cc}, with the only difference being
that the ``hyperbolic'' cotangent bundle at boundaries of the convex core
is replaced by the ``complex'' one. This statement (under a different formulation)
is proved in the appendix of 
\cite{McMullen} under the name of ``Kleinian reciprocity'', and is an
important technical statement allowing the author to prove the K\"ahler
hyperbolicity of Teichm\"uller space. 

Let us note that Theorem \ref{tm:cc} is a direct
consequence of Theorem \ref{tm:mcm} and of Theorem \ref{tm:main}. This will become
more clear below when we present another statement of Theorem \ref{tm:main}. 
Below we will give a direct proof of Theorem \ref{tm:cc}, thus also
giving a more direct proof of the Kleinian reciprocity result.

Using the result of Kawai \cite{kawai} already mentioned above, 
Theorem \ref{tm:mcm} is equivalent
to the fact that the subspace of complex projective structures on $\dr N$
obtained from hyperbolic metrics on $N$ is a Lagrangian submanifold
of $(\CP(\dr N), Re(\omega_G))$, a fact previously known to Kerckhoff through
a different, topological argument involving Poincar\'e duality (personal 
communication).

\subsection{A Lagrangian translation of Theorem \ref{tm:main}}

In a similar vein
to what we have done above, let us consider the space  $\cG_S$ of hyperbolic ends.
Each such space gives a point in $\cT_H\times \ML$ for its pleated surface boundary,
and a point in $T^*\cT_C$ for its boundary at infinity. Thus, composing this
with the map $\delta$ we get an injective map:
$$k: \cG\to T^*\cT_H\times T^*\cT_C~.$$
Our main Theorem \ref{tm:main} can then be restated as follows:

\begin{thm}
$k(\cG)$ is a Lagrangian submanifold of $T^*\cT_H\times T^*\cT_C$.
\end{thm}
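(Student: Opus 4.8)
The plan is to deduce this Lagrangian statement from the combination of Theorem \ref{tm:main}, Theorem \ref{tm:cc} and Theorem \ref{tm:mcm} (McMullen's Kleinian reciprocity), using the fact that the boundary data of a hyperbolic end are related to each other precisely through the grafting map. First I would fix notation: write $\pi_H:T^*\cT_H\times T^*\cT_C\to T^*\cT_H$ and $\pi_C:T^*\cT_H\times T^*\cT_C\to T^*\cT_C$ for the two projections, so that on $T^*\cT_H\times T^*\cT_C$ the natural symplectic form is $\Omega=\pi_H^*(2\omega_H)-\pi_C^*\omega_C$ (the sign and the factor $2$ are chosen so that $k(\cG)$ has a chance of being isotropic, in view of Theorem \ref{tm:main}; one should check the orientation conventions here, but this is bookkeeping). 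A hyperbolic end $M\in\cG$ determines, on its pleated boundary $\dr_0M$, a point $(m,l)\in\cT_H\times\ML$, hence via $\delta$ a point in $T^*\cT_H$; and on $\dr_\infty M$ a complex projective structure $\sigma$, hence via the Schwarzian-derivative description a point $(c,q)\in T^*\cT_C$. The content recalled at the end of the ``Hyperbolic ends'' subsection is exactly that $\sigma=Gr_l(m)$, i.e. $\pi_C\circ k=Gr\circ\pi_H\circ k$ on $\cG$ (identifying $\CP\cong T^*\cT_C$).

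Next I would compute dimensions and establish that $k(\cG)$ is a submanifold of the right dimension. By the deformation theory of hyperbolic ends, $\cG_S$ is a manifold of real dimension $2\dim_\R\cT_H=4\dim_\C\cT_C = 2(6g-6)$, which is exactly half of $\dim_\R(T^*\cT_H\times T^*\cT_C)=2(6g-6)+2(6g-6)$. The map $k$ is injective (a hyperbolic end is determined by its pleated boundary data, by the result of \cite{dumas-survey} cited above, so even $\pi_H\circ k$ is injective), and it is at least $C^1$: $\pi_H\circ k=i'$ composed with $\delta$ is $C^1$ as noted after Theorem \ref{tm:cc}, and $\pi_C\circ k=Gr\circ(\pi_H\circ k)$ is $C^1$ because $Gr\circ\delta^{-1}$ is $C^1$ by Lemma \ref{lm:C1}. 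Tangentiability and invertibility of the tangent map of $\delta$ then give that $k$ is a $C^1$ embedding onto a $C^1$ submanifold of half dimension; for the Lagrangian conclusion it then suffices to show $k^*\Omega=0$.

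To see $k^*\Omega=0$, use the factorization $k=( \iota_H, Gr\circ\iota_H)$ where $\iota_H:=\pi_H\circ k:\cG\to T^*\cT_H$ is (up to the identification $\delta$) the map $i'$ of Theorem \ref{tm:cc}. Then $k^*\Omega = \iota_H^*\big(2\omega_H - Gr^*\omega_C\big)$, and by Theorem \ref{tm:main} the bracketed form vanishes identically on $\cT_H\times\ML$, hence $k^*\Omega=0$. This is the whole point: the grafting identity $\sigma=Gr_l(m)$ pins the two boundary data together so tightly that $k(\cG)$ is literally the graph of $Gr$ (transported by $\delta$), and the graph of a symplectomorphism — here $Gr$ is symplectic for $(2\omega_H,\omega_C)$ by Theorem \ref{tm:main} — is always Lagrangian in the product with the twisted-sign symplectic form. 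Conversely, running the argument backwards, once one knows $k(\cG)$ is Lagrangian and that $k$ is a graph over $T^*\cT_H$, one recovers $Gr^*\omega_C=2\omega_H$; this is the sense, alluded to in the text, in which the two formulations are equivalent, and it also shows how Theorem \ref{tm:cc} drops out by intersecting $k(\cG)$ with $i(\cG(N))\times\{*\}$-type subvarieties — but for the present statement I would not need that direction.

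The main obstacle, and the only place real work is hidden, is justifying that $k$ is a $C^1$ embedding onto a genuine $C^1$ submanifold of dimension $6g-6$ over $\R$-dimension doubled, i.e. that the smooth-structure issues (the non-smoothness of $\delta$ and of $i'$, only tangentiability a priori) do not obstruct calling $k(\cG)$ a ``Lagrangian submanifold'' in the usual sense. This is handled exactly as in the discussion following Theorem \ref{tm:cc}: one passes through the complex-projective side, where the relevant maps $\cG\to\CP(\dr_\infty M)$ are honestly smooth by the theory of quasifuchsian/hyperbolic-end deformations, invokes the $C^1$ regularity (Lemma \ref{lm:C1} and Theorem \ref{tm:bonahon}) to transport this back across $Gr$ and $\delta$, and then the vanishing $k^*\Omega=0$ together with the half-dimension count gives ``Lagrangian'' in the $C^1$ sense, which is all that is claimed. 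Everything else — the dimension arithmetic, the sign/factor-of-$2$ conventions defining $\Omega$, and the identity $k^*\Omega=\iota_H^*(2\omega_H-Gr^*\omega_C)$ — is routine once Theorem \ref{tm:main} is in hand.
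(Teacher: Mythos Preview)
Your argument is correct and is precisely the ``soft'' direction of the equivalence the paper asserts: $k(\cG)$ is the graph of $Gr\circ\delta^{-1}$, and the graph of a map pulling back $\omega_C$ to $2\omega_H$ is Lagrangian for $\pi_H^*(2\omega_H)-\pi_C^*\omega_C$. Two minor points: you announce Theorems \ref{tm:cc} and \ref{tm:mcm} as ingredients but never use them (only Theorem \ref{tm:main} enters the computation $k^*\Omega=\iota_H^*(2\omega_H-Gr^*\omega_C)=0$), and there is a small type mismatch in writing ``$Gr\circ\iota_H$'' since $\iota_H$ lands in $T^*\cT_H$ --- you mean $Gr\circ\delta^{-1}\circ\iota_H$.

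The paper, however, goes the other way around: it does \emph{not} assume Theorem \ref{tm:main} and then deduce the Lagrangian statement, but rather proves both at once via the \emph{relative volume} $V_R$ of a hyperbolic end (subsections \ref{ssec:relative}--\ref{ssec:proof}). The key step is Proposition \ref{pr:relative}, the first-variation formula
\[
dV_R \;=\; \tfrac{1}{2}\,(\delta\circ Gr^{-1})^*\beta_H \;-\; \tfrac{1}{4}\,\beta_C,
\]
which says that the Liouville form of $\Omega$ restricts to an exact form on $k(\cG)$. Taking $d$ gives simultaneously $k^*\Omega=0$ (the Lagrangian theorem) and $2(\delta\circ Gr^{-1})^*\omega_H=\omega_C$ (Theorem \ref{tm:main}). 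So the paper's proof is strictly stronger: it exhibits $V_R$ as a generating function for the Lagrangian $k(\cG)$, which your graph argument, while perfectly valid once Theorem \ref{tm:main} is in hand, does not produce.
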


We will actually prove our main result in this version, which is 
clearly equivalent to Theorem \ref{tm:main}. Let us stress again that 
$k(\cG)$ is not smooth, but only the graph of a $C^1$ map.

\subsection{The intersection form and $\omega_H$} \label{ssc:intersection}

An efficient ``combinatorial'' description of $\cT_H$ was given in the work of Thurston
on earthquakes. Later, a powerful analytical realization of the same ideas was developed
in a series of papers by Bonahon \cite{bonahon-toulouse}, \cite{bonahon-ens}. The
earthquake description of $\cT_H$ is somewhat related to a much earlier parametrization
of the same space in terms of the Fenchel-Nielsen coordinates, whose idea
is to glue a Riemann surface from pairs of pants, the pants being characterized by the
length of their boundary components and the gluing being characterized by ``twists
parameters''. 
Thurston's earthquake description of $\cT_H$ 
describes a hyperbolic metric $m\in \cT_H$ as obtained by a left earthquake on
a measured lamination from another base hyperbolic metric $m_0$. It is remarkable
that this measured lamination completely determines the earthquake, and is in turn
completely determined by the two metrics $m, m_0\in \cT_H$. 

Thus, in Thurston's description the hyperbolic Teichm\"uller space is parametrized
by the space of measured geodesic laminations. However, the space $\ML$ does not
possess a natural differentiable structure, which makes the analysis on this space
hardly possible. One of the key achievements of the works of Bonahon \cite{bonahon-toulouse}, 
\cite{bonahon-ens} was to develop the calculus on $\ML$ using $\R$-valued transverse
cocycles or, equivalently, transverse H\"older distributions for geodesic laminations.
Essentially, Bonahon gave a very elegant description of the tangent space to $\ML$.
This allowed him to provide a characterization of the space $\cT_H$ itself as
homeomorphic to an open cone in a vector space $\cH(\lambda, \R)$ of $\R$-valued transverse cocycles for
a lamination $\lambda$, see Theorem A in \cite{bonahon-toulouse},
and also prove that the vector fields tangent to $\ML$ are Hamiltonian vector
fields with respect to Thurston's symplectic structure on $\cH(\lambda, \R)$,
with the Hamiltonian function being essentially the hyperbolic length, see
in \cite{bonahon-toulouse}. 

In a later work Bonahon and S\"ozen \cite{sozen-bonahon} established
that the Thurston's symplectic form on $\cH(\lambda, \R)$ is (up to a
constant) an image of the Weil-Petersson symplectic form on $\cT_H$ under
the homeomorphism of this space into $\cH(\lambda, \R)$. The proof goes
through Goldman's characterization of the Weil-Petersson symplectic form 
in terms of a cup product in a twisted cohomology group \cite{goldman-symplectic}.
Thus, Bonahon's description of $\cT_H$ in terms of shearing coordinates 
can be said to provide a symplectic map from $\cT_H$ with its usual Weil-Petersson
symplectic form to the vector space $\cH(\lambda, \R)$ with its Thurston's
symplectic form. Theorem \ref{tm:main} can be construed as an analog 
concerning related but twice bigger spaces: on one hand $\cT_H \times \ML$, on the other 
the space $\CP$ of complex projective structures on $S$.

The space $\cT_H \times \ML$ is naturally a real symplectic manifold. The
length function has an extension 
$L_m: \cH(\lambda, \R)\to \R, \dot{l}\in \cH(\lambda, \R)\to L_m(\dot{l})\in \R$ to geodesic 
laminations with transverse cocycles, where it can be interpreted as a differential of the 
corresponding function on $\ML$, see \cite{bonahon-toulouse}. Now, given a vector
field $\dot{m}$ tangent to $\cT_H$ we obtain a pairing $L_{\dot{m}}(\dot{l})$ as
the derivative of $L_m(\dot{l})$ in the direction of $\dot{m}$. Consider the following
two-form on $\cT_H \times \ML$:
\be
\omega'_H((\dot{m}_1,\dot{l}_1),(\dot{m}_2,\dot{l}_2)) = L_{\dot{m}_1}(\dot{l}_2)
-L_{\dot{m}_2}(\dot{l}_1).
\ee

\begin{lemma}
$\omega'_H=\omega_H$.
\end{lemma}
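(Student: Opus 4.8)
The plan is to show that the two-form $\omega'_H$ defined via the length pairing agrees with the cotangent symplectic form $\omega_H$ pulled back by $\delta$; the essential point is to unwind what $\delta$ does to tangent vectors and compare with the canonical symplectic form on $T^*\cT_H$. First I would recall the coordinate-free description of the cotangent symplectic form: if $\delta(m,l) = d L_m(l) \in T^*_m\cT_H$, then for tangent vectors to $\cT_H\times\ML$ of the form $(\dot m_i,\dot l_i)$, the image under $d\delta$ decomposes into a ``horizontal'' part (the variation of the base point $m$) and a ``vertical'' part (the variation of the covector, i.e. the variation of $dL_m(l)$ caused by both $\dot m$ and $\dot l$). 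The canonical two-form $\omega_H = d\lambda$ on $T^*\cT_H$ evaluated on two such vectors gives the antisymmetrized pairing of the horizontal part of one with the vertical part of the other.

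Next I would carry out this computation concretely. Writing $\theta$ for the tautological one-form on $T^*\cT_H$, its pullback under $\delta$ is $(\delta^*\theta)(\dot m,\dot l) = \langle dL_m(l), \dot m\rangle = L_{\dot m}(l)$, using Bonahon's extension of the length function and the fact that the differential of $m\mapsto L_m(l)$ in the direction $\dot m$ is $L_{\dot m}(l)$. Now I take the exterior derivative: for vector fields $(\dot m_1,\dot l_1)$ and $(\dot m_2,\dot l_2)$ extended suitably (e.g. as coordinate vector fields so that the Lie bracket term drops out),
\[
(\delta^*\omega_H)\big((\dot m_1,\dot l_1),(\dot m_2,\dot l_2)\big) = \dot m_1\!\cdot\! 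L_{\dot m_2}(l_2) - \dot m_2\!\cdot\! L_{\dot m_1}(l_1),
\]
where I abusively write $l_i$ for the $\ML$-component along the respective directions. Differentiating $L_{\dot m_2}(l)$ along $(\dot m_1,\dot l_1)$ produces two terms: a Hessian-type term $L_{\dot m_1,\dot m_2}(l)$ which is symmetric in the exchange $1\leftrightarrow 2$ and therefore cancels in the antisymmetrization, and the cross term $L_{\dot m_2}(\dot l_1)$ coming from differentiating the lamination argument. Collecting what survives yields exactly $L_{\dot m_2}(\dot l_1) - L_{\dot m_1}(\dot l_2)$, which, after matching the sign convention, is $\omega'_H((\dot m_1,\dot l_1),(\dot m_2,\dot l_2))$ as in the displayed formula.

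The main obstacle is regularity and the interpretation of the various length pairings on $\ML$, which has no smooth structure: I must justify that $L_{\dot m}(\dot l)$ — the derivative of the length functional in a base direction $\dot m$ paired against a transverse-cocycle direction $\dot l$ — is well-defined and that the symmetry of the ``Hessian'' term $L_{\dot m_1,\dot m_2}(l)$ genuinely holds. For this I would invoke Bonahon's work \cite{bonahon-toulouse}, \cite{bonahon-ens}: the length function $L_m(\dot l)$ is bilinear and smooth in the cocycle variable $\dot l \in \cH(\lambda,\R)$ and differentiable in $m$, and the mixed second derivatives commute because $L$ arises from an honest function on a genuine manifold chart (Bonahon's cone in $\cH(\lambda,\R)$). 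The tangentiability of $\delta$ and invertibility of its tangent map, already stated in the excerpt, guarantee that the pullback $\delta^*\omega_H$ is a well-defined two-form in the tangentiable sense, so the identity $\omega'_H = \omega_H$ is to be read as an equality of such tangentiable two-forms. Once these regularity points are granted, the computation above is essentially a routine unwinding of the definition of the canonical symplectic form, and no further input is needed.
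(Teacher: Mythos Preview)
Your approach is essentially the same as the paper's: both identify the pullback $\delta^*\beta$ of the Liouville form as $(\dot m,\dot l)\mapsto L_{\dot m}(l)$ and conclude from there. The paper's proof simply observes that this one-form is the natural primitive of $\omega'_H$ and stops, whereas you carry out the exterior-derivative computation explicitly, cancel the symmetric Hessian term, and discuss the regularity issues needed to justify the manipulation on $\ML$; so your write-up is a more detailed version of the same argument rather than a different one. (One small point: in your displayed Cartan formula the symbols $l_1,l_2$ should both be the base point $l$, not the $\ML$-components of the tangent vectors, and the sign you ``match'' at the end depends on whether $\omega_H$ is taken as $d\beta$ or $-d\beta$; the paper leaves this implicit as well.)
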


\begin{proof}
Let $(m,l)\in \cT\times \ML$, then $\delta(m,l)=(m,dL_\cdot(l))\in T^*\cT_H$. 
Denote by $\beta$ the Liouville form of $(T^*\cT_H,\omega_H)$, so that, if $(m,u)\in T^*\cT_H$
and $(\md, \ud)\in T_{(m,u)}T^*\cT_H$, then 
$$ \beta(\md,\ud)=u(\md)~. $$
If $(\md,\ld)\in T(\cT\times\ML)$ then $\beta(d\delta(\md, \ld))=L_{\md}(l)$.
This corresponds precisely to the Liouville form of $\omega'_H$, the result follows.
\end{proof}

\subsection{Another possible proof ?} \label{ssc:another}

Let us note that an alternative proof that only uses 2-dimensional quantities
may be possible, based essentially on the result of \cite{sozen-bonahon}.\footnote{%
We thank one of the anonymous referees for suggesting such an alternative proof.}
For this one would need to extend the ideas developed in this work to shear-bend
coordinates on $\cT_H\times \ML$, and show that the symplectic form $\omega_H$ on 
$\cH(\lambda,\R)$ extends to a complex symplectic form on the vector space of
complex-valued cocycles for $\lambda$. Presumably this complex symplectic form
would then coincide with the complexified Thurston symplectic form on $\cH(\lambda,\C)$. 

According to \cite{sozen-bonahon}, Thurston's symplectic form on $\cH(\lambda,\R)$ 
is equal to the Weil-Petersson symplectic form on the real character variety.
This equality should extend to the complexified Thurston symplectic form and 
the Weil-Petersson symplectic form on the complex character variety, because
both are holomorphic. 

But it was proved by Kawai \cite{kawai} that the Weil-Petersson form on the complex
character variety corresponds to the complex symplectic cotangent symplectic
form on $T^*\cT_C$, and Theorem \ref{tm:main} should follow.

Note that this line of reasoning is quite different from the the proof considered
below, which uses mostly the Bonahon-Schl\"afli formula in Lemma \ref{lm:bs} (or more
precisely the dual formula in Lemma \ref{lm:bs-dual}). Therefore the arguments
outlined in this subsection could be combined with those developed below, for
instance to obtain a new proof of Kawai's result \cite{kawai} from the results
of \cite{sozen-bonahon}, holomorphic continuation, and the use of the renormalized
volume.

\subsection{Cone singularities}

One interesting feature of the arguments used here is that they appear
likely to extend to the setting of hyperbolic surfaces with cone
singularities of angle less than $\pi$. One should then use hyperbolic
ends with ``particles'', i.e., cone singularities of angle less than $\pi$
going from the ``interior'' boundary to the boundary at infinity, as already
done in \cite{minsurf} and to some extent in \cite{volume}.

\subsection*{Acknowledgements}

The authors would like to thank Francis Bonahon for very useful and
relevant comments, in particular for the arguments used in the proof
of Lemma \ref{lm:bs-dual}. We are also grateful to St\'ephane 
Baseilhac, David Dumas and to 
Steve Kerckhoff for useful conversations related to the questions
considered here. The text was notably improved thanks to remarks
from two anonymous referees, one of which suggested the content
of subsection \ref{ssc:another}.

\section{The Schl\"afli formula and the dual volume}

In this section we recall the Schl\"afli formula, first in the simple
case of hyperbolic polyhedra, then in the more involved setting of convex
cores of hyperbolic 3-manifolds (as extended by Bonahon). We then deduce
from Bonahon's Schl\"afli formula a ``dual'' formula for the first-order
variation of the dual volume of the convex core. Finally we give the proof
of Lemma \ref{lm:C1}.

\subsection{The Schl\"afli formula for hyperbolic polyhedra}

Let $P\subset H^3$ be a convex polyhedron. The Schl\"afli formula
(see e.g. \cite{milnor-schlafli}) describes the first-order variation of
the volume of $P$, under a first-order deformation, in terms of the 
lengths and the first-order variations of the angles, as follows:
\begin{equation}\label{eq:schlafli}
dV = \frac{1}{2} \sum_e L_e d\theta_e~, 
\end{equation}
where the sum is over the edges of $P$, $L_e$ is the length of the
edge $e$, and $\theta_e$ is its exterior dihedral angle. 

There is also an interesting ``dual'' Schl\"afli formula.
Let 
$$ V^* = V - \frac{1}{2}\sum_e L_e\theta_e~, $$
be the {\it dual volume} of $P$, then, still under a first-order
deformation of $P$,
\begin{equation}\label{eq:schlafli-dual}
dV^* = - \frac{1}{2} \sum_e \theta_e dL_e~. 
\end{equation}
This follows from the Schl\"afli formula (\ref{eq:schlafli})
by an elementary computation.

\subsection{First-order variations of the volume of the convex core}

In many ways, the convex core of a quasifuchsian manifold is reminiscent 
of a polyhedron, with the edges and their exterior
dihedral angles being replaced by a measured lamination describing
the pleating of the boundary, see e.g. \cite{thurston-notes,epstein-marden}. 

Bonahon \cite{bonahon} has extended the Schl\"afli formula to 
this setting as follows. Let $M$ be a convex co-compact hyperbolic
manifold (for instance, a quasifuchsian manifold), let $\mu$
be the induced metric on the boundary of the convex core, and let 
$\lambda$ be its measured bending lamination. By a ``first-order
variation'' of $M$ we mean a first-order variation of the 
representation of the fundamental group of $M$. Bonahon shows that
the first-order variation of $\lambda$  under a first-order
variation of $M$ is described by a 
transverse H\"older distribution $\lambda'$, and there is a 
well-defined notion of length of such transverse H\"older 
distributions. This leads to 
a version of the Schl\"afli formula.

\begin{lemma}[The Bonahon-Schl\"afli formula \cite{bonahon}]\label{lm:bs}
The first-order variation of the volume $V_C$ of the convex
core of $M$, under a first-order variation of $M$, is given by
$$ dV_C = \frac{1}{2} L_\mu(\lambda')~. $$
\end{lemma}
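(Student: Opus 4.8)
The statement to prove is the Bonahon--Schl\"afli formula, Lemma \ref{lm:bs}: that $dV_C = \frac{1}{2} L_\mu(\lambda')$ for a first-order variation of a convex co-compact hyperbolic manifold $M$.

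The plan is to reduce to the polyhedral Schl\"afli formula \eqref{eq:schlafli} by approximation. First I would recall that any measured lamination $\lambda$ on the boundary of the convex core can be approximated by weighted multicurves $\lambda_n$ in the weak-* topology, and correspondingly the pleated surfaces $\dr CC(M)$ can be approximated by surfaces that are pleated along the $\lambda_n$, i.e.\ finite unions of totally geodesic pieces meeting along geodesic edges with prescribed exterior dihedral angles (the weights of $\lambda_n$). Truncating the manifold along such a piecewise-geodesic surface gives, after also taking a large compact exhaustion near the ends, an object to which the finite polyhedral formula applies: $dV = \frac12 \sum_e L_e \, d\theta_e$, where the sum runs over the bending edges. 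One then wants to pass to the limit $n\to\infty$, using on the right-hand side that $\sum_e L_e\, d\theta_e$ converges to $L_\mu(\lambda')$ once one has a good definition of the length of the transverse H\"older distribution $\lambda'$ obtained as the first-order variation of $\lambda$.

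The key technical steps, in order, are: (1) set up the deformation so that the representation of $\pi_1(M)$ varies smoothly and identify the induced first-order variation of the bending lamination as a transverse H\"older distribution $\lambda'$, with its length $L_\mu(\lambda')$ well-defined; (2) establish the approximation of $(\mu,\lambda)$ and of the bending data by weighted multicurves in a way that is uniform enough to control both $V_C$ and its first variation; (3) apply the classical Schl\"afli formula \eqref{eq:schlafli} on the approximating piecewise-geodesic truncations, being careful that the ``extra'' boundary faces introduced by truncation near infinity and along the non-geodesic part contribute nothing to first order (the exterior dihedral angles along those auxiliary edges are either $\pi$ or vary in a controlled way that cancels); (4) interchange the limit and the differential, which requires some equicontinuity/uniform convergence of the relevant length and volume functionals. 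I would lean on \cite{bonahon} and \cite{epstein-marden} for the structure of bending laminations and on Bonahon's calculus of transverse H\"older distributions \cite{bonahon-toulouse,bonahon-ens} to make sense of $L_\mu(\lambda')$.

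The main obstacle is step (4) together with the subtlety that the convex core boundary is generally not smooth and the map $M \mapsto (\mu,\lambda)$ is not differentiable in the naive sense: one must work with Bonahon's tangentiable structure and show that the volume $V_C$, while not smooth as a function on the deformation space, nonetheless has a well-defined first-order variation equal to $\frac12 L_\mu(\lambda')$. Controlling the error terms in the approximation uniformly --- so that the polyhedral identity survives the limit and no mass escapes to infinity or concentrates on the lamination --- is where the real work lies; this is essentially the content of Bonahon's argument and I would not expect a shortcut. A secondary point to handle carefully is orientation and sign conventions, so that the factor $\frac12$ and the sign of $L_\mu(\lambda')$ come out matching the convention in \eqref{eq:schlafli}.
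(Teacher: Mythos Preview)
The paper does not give a proof of Lemma \ref{lm:bs}: it is quoted as Bonahon's result \cite{bonahon}, with the surrounding sentence only clarifying that $\lambda'$ is a transverse H\"older cocycle whose length is defined via \cite{bonahon-toulouse,bonahon-ens,bonahon,bonahon2}. So there is nothing to compare your argument against in this paper; the lemma is used as a black box, and the paper's own work begins with the \emph{dual} formula, Lemma \ref{lm:bs-dual}, which is derived from Lemma \ref{lm:bs} together with the Leibniz-type identity $L_\mu(\lambda)' = L'_\mu(\lambda) + L_\mu(\lambda')$.

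That said, your outline is a fair high-level description of Bonahon's strategy in \cite{bonahon}: approximate the pleated boundary by polyhedral surfaces, apply the classical Schl\"afli identity \eqref{eq:schlafli}, and pass to the limit using the calculus of transverse H\"older distributions. You correctly flag that the heart of the matter is the uniform control needed to interchange the limit with the first-order variation, and that this is exactly where Bonahon's paper does the real work. One caution: your step (3) speaks of ``truncation near infinity'' contributing nothing, but for the convex core there is no infinity to truncate---$CC(M)$ is already compact with pleated boundary---so that part of the sketch is extraneous here (it becomes relevant later in the paper for the relative volume of a hyperbolic end, cf.\ Lemma \ref{lm:collar} and Lemma \ref{lm:complement}). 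If you want to actually write out a proof rather than cite \cite{bonahon}, you should drop that truncation step and focus entirely on the approximation of the pleating lamination and the uniform estimates needed for the limit.
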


Here $\lambda'$ is the first-order variation of the measured bending lamination,
which is a H\"older cocycle so that its length for $\mu$ can be defined, see 
\cite{bonahon-toulouse,bonahon-ens,bonahon,bonahon2}. 

\subsection{The dual volume}

Just as for polyhedra above, we define the dual volume
of the convex core of $M$ as 
$$ V_C^* = V_C -\frac{1}{2} L_\mu(\lambda)~. $$

\begin{lemma}[The dual Bonahon-Schl\"afli formula] \label{lm:bs-dual}
The first-order variation of $V^*$ under a first-order 
variation of $M$ is given by
$$ dV_C^* = - \frac{1}{2} L'_\mu(\lambda)~. $$
\end{lemma}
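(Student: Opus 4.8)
The plan is to mimic exactly the elementary computation that produces the dual Schl\"afli formula \eqref{eq:schlafli-dual} from \eqref{eq:schlafli} in the polyhedral case, now in the ``integrated'' setting of Bonahon's length pairing on transverse H\"older cocycles. We start from the definition $V_C^* = V_C - \frac{1}{2} L_\mu(\lambda)$ and differentiate along a first-order variation of $M$. The first term is controlled by the Bonahon-Schl\"afli formula (Lemma \ref{lm:bs}): $dV_C = \frac{1}{2} L_\mu(\lambda')$. For the second term, we use that $L_\mu(\lambda)$ is the hyperbolic length of the bending lamination for the induced metric on the boundary of the convex core, and that this length is a \emph{bilinear}-type pairing in $\mu$ and $\lambda$ (linear in $\lambda$ as a cocycle, and depending on $\mu$); hence its first-order variation splits as
\be
d\bigl(L_\mu(\lambda)\bigr) = L'_\mu(\lambda) + L_\mu(\lambda')~,
\ee
where $L'_\mu(\lambda)$ denotes the variation coming from moving $\mu$ with $\lambda$ held fixed, and $L_\mu(\lambda')$ the variation coming from moving the cocycle $\lambda$ with the metric held fixed. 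Substituting, $dV_C^* = \frac{1}{2} L_\mu(\lambda') - \frac{1}{2}\bigl(L'_\mu(\lambda) + L_\mu(\lambda')\bigr) = -\frac{1}{2} L'_\mu(\lambda)$, which is the claim.

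The key steps, in order: first, recall precisely Bonahon's framework — $\lambda$ is a transverse measure (a nonnegative H\"older cocycle) on a maximal geodesic lamination $\lambda_0$, the length $L_\mu(\cdot)$ extends to a linear functional $\cH(\lambda_0,\R)\to\R$, and under a deformation of $M$ the pair $(\mu,\lambda)$ varies so that $\lambda'\in\cH(\lambda_0,\R)$ is the H\"older cocycle describing $\dot\lambda$ and $\dot\mu$ is a tangent vector to $\cT_H$. Second, justify the Leibniz-type splitting displayed above: one must check that $t\mapsto L_{\mu(t)}(\lambda(t))$ is differentiable and that its derivative is the sum of the partial derivatives, i.e. that $L$ is (at least) $C^1$ jointly in the metric and in the cocycle with the mixed partials behaving as expected. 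This is where one invokes the differentiability results of \cite{bonahon-toulouse,bonahon-ens} on the length function and its dependence on both arguments. Third, plug in Lemma \ref{lm:bs} and collect terms.

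The main obstacle is the second step: making rigorous the product rule $d(L_\mu(\lambda)) = L'_\mu(\lambda) + L_\mu(\lambda')$ when $\lambda$ is only a transverse H\"older distribution and the dependence of everything on the deformation parameter is merely tangentiable rather than smooth. One has to be careful that $L'_\mu(\lambda)$ is well-defined — that is, that the derivative of $\mu\mapsto L_\mu(\lambda)$ in the direction $\dot\mu$, with $\lambda$ frozen, exists — and that the $C^1$-regularity established by Bonahon legitimately allows the chain/product rule; this is precisely the point for which the authors thank Bonahon in the acknowledgements, so I expect the proof to quote his analytic machinery (the extension of $L_\mu$ to $\cH(\lambda_0,\R)$ and its continuity/differentiability in $\mu$) rather than re-derive it. Everything else — the algebra of substituting into the definition of $V_C^*$ — is as routine as the polyhedral computation following \eqref{eq:schlafli}.
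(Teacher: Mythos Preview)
Your proposal is correct and matches the paper's approach exactly: both reduce the lemma to the Leibniz rule $L_\mu(\lambda)' = L'_\mu(\lambda) + L_\mu(\lambda')$ and then substitute into $V_C^* = V_C - \tfrac{1}{2}L_\mu(\lambda)$ using Lemma~\ref{lm:bs}. The one thing the paper adds beyond your outline is the specific justification of that Leibniz rule --- a decomposition of the difference quotient together with the observation (credited to Bonahon) that $\mu\mapsto L_\mu(\lambda)$ extends holomorphically to $PSL_2(\C)$-representations, which yields the needed joint continuity of $L'_\mu(\lambda)$ in $(\mu,\mu',\lambda)$.
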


This formula has a very simple interpretation in terms of 
the geometry of Teichm\"uller space: up to the factor $-1/2$,
$dV^*$ is equal to the pull-back by $\delta$ of the Liouville form of the cotangent
bundle $T^*\cT_H$. Note also that this formula can be 
understood in an elementary way, without reference to a
transverse H\"older distribution: the measured lamination $\lambda$ is
fixed, and only the hyperbolic metric $\mu$ varies. 
The proof we give here,
however, is based on Lemma \ref{lm:bs} and thus on the
whole machinery developed in \cite{bonahon}.

Theorem \ref{tm:cc} is a direct consequence of Lemma 
\ref{lm:bs-dual}: since $dV_C^*$ coincides with the
Liouville form of $T^*\cT_H(\dr N)$ on $i(\cG(N))$, it follows
immediately that $i(\cG(N))$ is Lagrangian for the symplectic
form $\omega_H$ on $T^*\cT_H(\dr N)$.

\begin{proof}[Proof of Lemma \ref{lm:bs-dual}]
Thanks to Lemma \ref{lm:bs} we only have to show a purely 2-dimensional statement,
valid for any closed surface $S$ of genus at least $2$: 
that the function
$$ \begin{array}{cccc}
L: & \cT \times \ML & \rightarrow & \R\\
& (\mu,\lambda) & \mapsto & L_\mu(\lambda)
\end{array}
$$
admits directional derivatives, and that its derivative with respect to a 
tangent vector $(\mu', \lambda',)$ is equal to 
\begin{equation}
  \label{eq:der}
 L_\mu(\lambda)' = L_\mu'(\lambda) + L_\mu(\lambda')~.  
\end{equation}
Two special cases of this formula were proved by Bonahon: when $\mu$ is
kept constant \cite{bonahon-ens} and when $\lambda$ is kept constant 
\cite{bonahon-toulouse}.

To prove equation (\ref{eq:der}), suppose that $\mu_t, \lambda_t$ depend on 
a real parameter $t$ chosen so that the derivatives $\mu'_t, \lambda'_t$ exist
for $t=0$, with
$$ \frac{d\mu_t}{dt}_{|t=0} = \mu'~,  ~~ \frac{d\lambda_t}{dt}_{|t=0} = \lambda'~. $$
We can also suppose that $(m_t)$ is a smooth curve for the differentiable
structure of Teichm\"uller space. We can then decompose as follows :
$$ \frac{L_{\mu_t}(\lambda_t)-L_{\mu_0}(\lambda_0)}{t} = 
\frac{L_{\mu_t}(\lambda_t) - L_{\mu_0}(\lambda_t)}{t} + 
\frac{L_{\mu_0}(\lambda_t)-L_{\mu_0}(\lambda_0)}{t}~. $$
The second term on the right-hand side converges to $L_\mu(\lambda')$ by 
\cite{bonahon-ens} so we now concentrate on the first term. 

To prove that the first term converges to $L'_\mu(\lambda)$, it is sufficient
to prove that $L'_\mu(\lambda)$ depends continuously on $\mu, \mu'$ and on 
$\lambda$. This can be proved by a nice and simple argument, which was suggested
to us by Francis Bonahon. $\mu$ can be replaced by a representation of the fundamental
group of $S$ in $PSL_2(\C)$, as in \cite{bonahon-toulouse}. For fixed $\lambda$,
the function $\mu\rightarrow L_\mu(\lambda)$ is then holomorphic in $\mu$, and continuous in 
$\lambda$. Since it is holomorphic, it is continuous with respect to $\mu$ and to $\mu'$,
and the result follows.
\end{proof}

\subsection{A cotangent space interpretation}

Here we sketch for completeness the argument showing that the map
$\delta:\cT_H\times \ML
\rightarrow T^*\cT_H$ defined in the introduction is a homeomorphism. This is
equivalent to the following statement.

\begin{lemma}\label{lm:delta}
Let $m_0\in \cT_H$ be a hyperbolic metric on $S$. For each cotangent vector 
$u\in T^*_{m_0}\cT_H$, there exists a unique $l\in \ML$ such that the differential
of the function $m\mapsto dL_m(l)$ is equal to $u$ at $m_0$.  
\end{lemma}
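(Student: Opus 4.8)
The plan is to view the statement as asserting that a certain map is a homeomorphism from $\ML$ onto $T^*_{m_0}\cT_H$, and to prove this by combining injectivity (via earthquakes and the Kerckhoff convexity of length functions) with a degree/properness argument for surjectivity. Concretely, fix $m_0\in\cT_H$ and consider the map $\Phi:\ML\to T^*_{m_0}\cT_H$ sending $l$ to the differential at $m_0$ of $m\mapsto L_m(l)$. The first step is to recall from Kerckhoff and Wolpert that for each fixed $l$ the function $m\mapsto L_m(l)$ is real-analytic and convex (indeed strictly convex transverse to the direction of $l$ itself along twist deformations, and convex along earthquake and general paths), so $\Phi(l)$ is well-defined and depends continuously on $l$; continuity and the fact that $\Phi$ is positively homogeneous of degree one in $l$ reduce the problem to showing $\Phi$ restricts to a homeomorphism from the sphere of ``unit-length'' laminations (say those with $L_{m_0}(l)=1$) onto a sphere in $T^*_{m_0}\cT_H$.

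For injectivity I would use the derivative of the earthquake flow: by Kerckhoff's formula, the derivative of $L_m(l)$ along the earthquake along a measured lamination $\lambda$ is the cosine-weighted intersection number (the ``Kerckhoff cosine formula''), and more importantly the second variation is strictly positive, so $m\mapsto L_m(l)$ is strictly convex along earthquake paths. If $\Phi(l_1)=\Phi(l_2)$ with $l_1\neq l_2$, then $L_{m_0}(l_1)=L_{m_0}(l_2)$ cannot both be stationary points of the proper strictly convex function $m\mapsto L_m(l_1)-L_m(l_2)$ unless $l_1=l_2$; more carefully, one uses that the gradient of $L_\cdot(l)$ in the earthquake direction along $l$ itself is nonzero (the length of $l$ strictly increases under grafting/earthquake along $l$), which pins down the ``radial'' part of $l$, together with the fact that distinct projectivized laminations give distinct differentials because the intersection pairing $\iota(\cdot,\lambda)$ separates points of $\ML$. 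This last point — that $l\mapsto(\lambda\mapsto\text{derivative of }L_{m_0}(l)\text{ along the }\lambda\text{-earthquake})$ determines $l$ — is where I would lean on the identification of $T_{m_0}\cT_H$ with $\ML$ via earthquake vector fields together with Kerckhoff's formula writing this derivative as a total intersection/geodesic-current pairing, whence injectivity follows from the fact that geodesic currents are determined by their pairings.

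For surjectivity, having injectivity and continuity of the map between the two spheres (the unit-length laminations, which form a sphere of dimension $6g-7$ by Thurston, and the unit sphere in the $(6g-6)$-dimensional vector space $T^*_{m_0}\cT_H$), I would argue by invariance of domain that the image is open, and by properness of $l\mapsto\Phi(l)$ (using that $L_{m_0}(l)\to\infty$ as $l\to\infty$ and that the differential stays bounded away from $0$ on unit-length laminations, which again follows from Kerckhoff's formula since one can always find a direction in which $L_m(l)$ has nonzero derivative) that the image is closed; a nonempty open and closed subset of the connected sphere is everything. The main obstacle I expect is precisely the non-smoothness of $\ML$: one cannot naively invoke the inverse function theorem, so the degree-theory/invariance-of-domain argument has to be set up carefully at the level of Bonahon's tangentiable structure, and the uniform lower bound on $\|\Phi(l)\|$ over the lamination sphere must be extracted from Kerckhoff's cosine formula rather than from any smooth compactness statement. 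Once these two analytic inputs (strict convexity along earthquakes, and the intersection-form description of the differential) are in hand, the topological conclusion is routine.
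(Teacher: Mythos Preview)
Your approach is workable in outline but both more laborious than necessary and contains a genuine error in the injectivity step. The paper's proof is a two-line reduction: by Wolpert's duality formula (extended from simple closed curves to measured laminations), the differential $dL_\cdot(l)|_{m_0}$ is exactly the $\omega_{WP}$-dual of the infinitesimal earthquake vector $e_l\in T_{m_0}\cT_H$. Since $\omega_{WP}$ is nondegenerate, $\Phi$ is the composition of the infinitesimal earthquake map $\ML\to T_{m_0}\cT_H$, $l\mapsto e_l$, with a linear isomorphism. Thurston and Kerckhoff tell us that $l\mapsto e_l$ is a homeomorphism onto $T_{m_0}\cT_H$, and the lemma follows at once. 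No convexity, no invariance of domain, no properness estimate is needed.

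Your injectivity argument, by contrast, has two gaps. First, ``$m\mapsto L_m(l_1)-L_m(l_2)$ is strictly convex'' is not justified: it is a \emph{difference} of convex functions, and there is no reason for it to be convex, let alone strictly so. Second, and more concretely, your claim that ``the length of $l$ strictly increases under earthquake along $l$'' is false: $L_m(l)$ is \emph{constant} along the earthquake path $t\mapsto E_l^t(m)$ (for a simple closed curve this is immediate, and it persists for laminations). So the ``radial'' part of your argument collapses. What you are really trying to establish, once you trace through the Kerckhoff cosine formula and the identification of tangent directions with earthquake vectors, is precisely that $l\mapsto e_l$ is injective --- and at that point you may as well quote Thurston's infinitesimal earthquake theorem directly and use the Wolpert duality, which is the paper's route.

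Your surjectivity scheme (homogeneity, invariance of domain on the $(6g-7)$-sphere $\{L_{m_0}(l)=1\}$, properness) would go through once injectivity and the nonvanishing of $\Phi$ on nonzero laminations are in hand, but it is a detour: surjectivity of $l\mapsto e_l$ is already part of the earthquake theorem, so the Wolpert duality gives surjectivity of $\Phi$ for free.
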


\begin{proof}
Wolpert \cite{wolpert-formula} discovered that the Weil-Petersson symplectic
form on $\cT_H$ has a remarkably simple form in Fenchel-Nielsen coordinates: 
$$ \omega_{WP} = \sum_i dL_i \wedge d\theta_i~, $$
where the sum is over the simple closed curves in the complement of a pants
decomposition of $S$. 
A direct consequence is that, given a weighted multicurve $w$ on $S$, the 
dual for $\omega_{WP}$ of the differential of the length $L_w$ of $w$
is equal to the infinitesimal fractional Dehn twist along $w$. 

This actually extends when $w$ is replaced by a measured lamination $\lambda$,
with the infinitesimal fractional Dehn twist replaced by the earthquake vector
along $\lambda$, see \cite{wolpert-ajm,sozen-bonahon}. So the Weil-Petersson
symplectic form provides a duality between the differential of the lengths 
of measured laminations and the earthquake vectors. 

Moreover the earthquake vectors associated to the elements of $\ML$ cover
$T_m\cT_H$ for all $m\in \cT_H$ (see \cite{kerckhoff}), it follows that the
differentials of the lengths of the measured laminations cover $T^*_m\cT_H$.
\end{proof}

Note that this argument extends directly to hyperbolic surfaces with cone singularities,
when the cone angles are less than $\pi$. In that case the fact that earthquake vectors
still span the tangent to Teichm\"uller space follows from \cite{cone}.

\subsection{Proof of Lemma \ref{lm:C1}}

Lemma \ref{lm:C1} is mostly a consequence of the tools developed by Bonahon
in \cite{bonahon-toulouse,bonahon-variations}. We first recall some of
his results. Given a lamination $\lambda$ on $S$, he defined the space
$\cH(\lambda,\R)$ of real-valued transverse cocycles for $\lambda$, and
proved that it is related to measured laminations in interesting ways.
\begin{itemize}
\item If $l\in \ML$ and 
$\lambda$ is a lamination which contains the support of $l$, then $l$ defines a real-valued 
transverse cocycle on $\lambda$ (see \cite{bonahon-toulouse}). 
\item Transverse cocycles can be used to define a polyhedral ``tangent cone'' to $\ML$ at a point
$l$. Given a lamination $\lambda$ containing the support of $l$, the transverse cocycles on $\lambda$
satisfying a positivity condition (essentially, that the transverse measure remains positive)
can be interpreted as ``tangent vectors'' to $\ML$ at $l$, i.e., velocities at $0$
of curves in $\ML$ starting from $l$. The laminations containing the support of $l$ therefore 
correspond to the ``faces'' of the tangent cone to $\ML$ at $l$.
\item There is a well-defined notion of length of a transverse cocycle $h$ for a 
hyperbolic metric $m$ on $S$, extending the length of a measured lamination. 
If $l\in \ML$ then $L_m$ is tangentiable at $l$, if $\lambda$ is a lamination 
containing the support of $l$, and if $h\in \cH(\lambda,\R)$, then $L_m(h)$ is equal
to the first-order variation of $L_m(l)$ under the deformation of $l$ given by $h$.
\end{itemize}

Transverse cocycles are also related to pleated surfaces. 
\begin{itemize}
\item Transverse cocycle provide ``shear coordinates'' on Teichm\"uller space. 
Given a ``reference'' hyperbolic metric $m_0\in \cT_H$ and another 
hyperbolic metric $m\in \cT_H$, there is a unique element $h_0\in \cH(\lambda,\R)$ such 
that ``shearing'' $m_0$ along $h$ yields $m$. The elements of $\cH(\lambda,\R)$ which
can be obtained in this way have a simple characterization in terms of a positivity
condition. 
\item Transverse cocycles also describe the bending of a pleated surface: $\cH(\lambda,
\R/2\pi \Z)$ is in one-to-one correspondence with the space of equivariant
pleated surfaces of given induced metrics for which the support of the pleating 
locus is contained in $\lambda$.
\item Pleated surfaces with pleating locus contained in $\lambda$ 
are associated to a complex-valued
transverse cocycle $h\in \cH(\lambda,\C/2\pi i\Z)$, with real part describing
the induced metric (in terms of its ``shear coordinates'' with respect to a given
reference metric) and imaginary part describing the bending measure.
\end{itemize}

Each pleated equivariant surface in $H^3$ defines a representation of its fundamental
group in $PSL(2,\C)$. In the neighbhorhood of a convex pleated surface, this
representation is the holonomy representation of a complex projective structure.
If the induced metric and measured bending lamination of the convex pleated surface
are $m\in \cT_H$ and $l\in \ML$ respectively, if $\lambda$ contains the 
support of $l$, and if $\lb$ is the projection of $l$ in 
$\cH(\lambda, \R/2\pi \Z)$, there is a well-defined map from
$\cH(\lambda,\C/2\pi i\Z)$ to $\CP$ defined in the neighborhood of $i\lb$, sending
a complex-valued transverse cocycle $h$ to the complex projective structure $\sigma$
of the pleated surface obtained from $h$. This map is differentiable, taking
its tangent at $i\lb$ yields a map
$$ \phi_\lambda:\cH(\lambda,\C)\rightarrow T_\sigma \CP~. $$

\begin{thm}[Bonahon \cite{bonahon-variations}] \label{tm:bonahon}
The map $\phi_\lambda$ is complex-linear (with respect to the complex structure
on $\CP$).  
\end{thm}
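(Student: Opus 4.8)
The plan is to pass to the holonomy representation and to recognise $\phi_\lambda$ as the derivative of a holomorphic map. First I would use that the holonomy map $\mathrm{hol}:\CP\to X$, where $X$ is the $PSL(2,\C)$-character variety of $\pi_1(S)$, is a holomorphic local diffeomorphism onto its image, and that, $\rho:=\mathrm{hol}(\sigma)$ being irreducible (it is the holonomy of a nearly convex pleated surface bounding a hyperbolic end), $X$ is near $\rho$ a complex manifold with $T_\rho X\cong H^1(\pi_1(S);\mathfrak{sl}(2,\C)_{\mathrm{Ad}\rho})$, the complex structure being multiplication by $i$ on the coefficients $\mathfrak{sl}(2,\C)$; by Kawai's theorem \cite{kawai} this is the complex structure on $\CP$ (via $\CP\cong T^*\cT_C$) used elsewhere in this paper. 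Writing $\Phi:\cH(\lambda,\C/2\pi i\Z)\to\CP$ for the differentiable map $h\mapsto\sigma_h$ whose tangent at $i\lb$ is $\phi_\lambda$, it then suffices to prove that $\Psi:=\mathrm{hol}\circ\Phi$ is holomorphic near $i\lb$, for then $\phi_\lambda=(d\,\mathrm{hol})^{-1}\circ d\Psi_{i\lb}$ is automatically complex-linear.

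Next I would give the concrete description of $\Psi$. The representation $\Psi(h)$ is the holonomy of the equivariant surface obtained from the $\C$-valued transverse cocycle $h$ by the shear-bend (quakebend) construction: $\mathrm{Re}(h)$ prescribes the shear of the induced metric relative to the reference metric, $\mathrm{Im}(h)$ prescribes the bending. When $\lambda$ is a weighted multicurve this is an explicit equivariant product of matrices, a leaf carrying complex weight $z$ contributing a factor conjugate to $\exp(\tfrac{z}{2}E)$ with $E$ a fixed generator of the one-parameter subgroup having that leaf as axis; such a product is visibly holomorphic in the weights. For a general lamination the shear-bend representation is the analytic continuation of Thurston's earthquake flow — the complex earthquakes of \cite{McMullen} and of Epstein--Marden — and Bonahon's transverse (H\"older) cocycle calculus expresses $h\mapsto\Psi(h)$ by a cocycle-indexed expression that converges locally uniformly on $\cH(\lambda,\C/2\pi i\Z)$ and depends holomorphically on $h$; this is the substance of the holomorphy claim.

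Granting this, the conclusion is transparent infinitesimally. The derivative $d\Psi_{i\lb}$ carries a real cocycle $\dot h\in\cH(\lambda,\R)$, which deforms the shear (equivalently the induced hyperbolic metric of the pleated surface), to some class $c(\dot h)\in H^1(\pi_1(S);\mathfrak{sl}(2,\C)_{\mathrm{Ad}\rho})$; and it carries the purely imaginary cocycle $i\dot h$, which deforms the bending along the same locus, to $i\,c(\dot h)$. Leaf by leaf this is because the infinitesimal shear is the Killing field of a hyperbolic one-parameter group with the leaf as axis while the infinitesimal bend is the Killing field of the rotation about the same axis, and these two Killing fields differ exactly by multiplication by $i$ in $\mathfrak{sl}(2,\C)$; assembling (integrating) over the leaves of $\lambda$ respects this relation. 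Hence $\phi_\lambda(i\dot h)=i\,\phi_\lambda(\dot h)$, which is the assertion.

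The hard part is the analytic bookkeeping compressed into the second and third paragraphs when $\lambda$ is not a multicurve: one has to work inside Bonahon's framework of transverse H\"older distributions to make sense of the shear-bend construction for a $\C$-valued cocycle, to show that the induced map to $X$ (and thence to $\CP$) is not just real-differentiable but complex-analytic, and to verify that the convergence of the relevant series is locally uniform in $h$, so that holomorphy in each variable separately upgrades to joint holomorphy near $i\lb$. A secondary point, but one already settled by Kawai \cite{kawai}, is that the identification $T_\sigma\CP\cong H^1(\pi_1(S);\mathfrak{sl}(2,\C)_{\mathrm{Ad}\rho})$ intertwines the complex structure appearing in the statement with multiplication by $i$ on coefficients; with that granted, the computation above delivers the complex-linearity of $\phi_\lambda$.
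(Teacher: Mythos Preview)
The paper does not give its own proof of this statement: Theorem \ref{tm:bonahon} is quoted as a result of Bonahon \cite{bonahon-variations} (see also \cite{bonahon-toulouse}) and is used as a black box in the proof of Lemma \ref{lm:C1}. So there is nothing in the paper to compare your argument against directly.

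That said, your outline is essentially the shape of Bonahon's original argument: one shows that the shear--bend parametrisation $h\mapsto$ (holonomy of the associated pleated surface) is a holomorphic map from $\cH(\lambda,\C/2\pi i\Z)$ to the $PSL(2,\C)$-character variety, and then complex-linearity of the tangent map is immediate. Your infinitesimal picture (shear and bend along the same leaf differ by multiplication by $i$ in $\mathfrak{sl}(2,\C)$) is exactly the geometric heart of the matter, and your acknowledgement that the serious work lies in the uniform convergence/holomorphy estimates when $\lambda$ is not a multicurve is accurate; that is precisely what Bonahon's transverse-cocycle calculus in \cite{bonahon-toulouse} is designed to handle.

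One small correction: you do not need Kawai \cite{kawai} to identify the complex structure on $\CP$ with the one coming from the character variety. Kawai's theorem concerns the \emph{symplectic} forms. The complex structure compatibility is more elementary: the holonomy map $\CP\to X$ is a local biholomorphism because the monodromy of the Schwarzian equation depends holomorphically on its coefficients (this is classical; see e.g.\ \cite{dumas-survey}). So the step ``by Kawai's theorem this is the complex structure on $\CP$'' should be replaced by this more direct fact.
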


However, a pleated surface is also described by its induced metric and 
measured bending lamination, and thus an element of $\cT_H\times \ML$. Using
the map $\delta:\cT_H\times \ML\rightarrow T^*\cT_H$ defined above, we obtain
a map, defined in the neighborhood of $i\lb$, from $\cH(\lambda,\C/2\pi i\Z)$ to
$T^*\cT_H$, which by definition is also differentiable.
Taking the differential of this map yields another linear map
$$ \psi_\lambda:\cH(\lambda,\C)\rightarrow T^*\cT_H~. $$

The definitions (and the arguments of \cite{bonahon-toulouse,bonahon-variations})
then show that $\phi_\lambda\circ\psi_\lambda^{-1}$ is partially equal to the 
tangent map of $Gr\circ \delta^{-1}$, in the following sense. Let $m\in \cT_H$,
and let $u\in T^*_m\cT_H$, and let $(m,l)=\delta^{-1}(m,u)\in \cT_H\times \ML$. 
Let then $(\md,\ud)\in T_{(m,u)}(T^*\cT_H)$, and let $\ld$ be the tangent vector
to $\ML$ at $l$ corresponding to $\ud$. There is then a lamination $\lambda$ 
containing the support of both $l$ and $\ld$, and 
$\phi_\lambda\circ\psi_\lambda^{-1}(\md,\ud)=T(Gr\circ \delta^{-1})(\md,\ud)$. 
Its definition shows that $\phi_\lambda\circ\psi_\lambda^{-1}$ is linear. So,
to prove that $Gr\circ \delta^{-1}$ is differentiable, it is sufficient to
show that $\phi_\lambda\circ\psi_\lambda^{-1}$ does not depend on $\lambda$.

We now consider a fixed lamination $\lambda$ containing the support of 
$l$, and a variation $(\md,\ud)$ inducing a variation $\ld$ of $l$ with support
contained in $\lambda$. To $(\md,\ld)$ is associated, through Bonahon's 
shear-bend coordinates, a complex transverse cocycle $h\in\cH(\lambda,\C)$,
with real part $h_0$ corresponding to $\md$ and imaginary part corresponding
to $\ld$. The first-order variation of $\sigma$ corresponding to $\md$,
$\phi_\lambda\circ\psi_\lambda^{-1}(\md,0)$, is shown in \cite{bonahon-variations}
to be independent of $\lambda$, actually the following is clearly equivalent to 
Lemma 13 in \cite{bonahon-variations}.

\begin{lemma}[Bonahon] \label{lm:TT}
For fixed $l\in \ML$, the restriction map $Gr_l:\cT\rightarrow \CP$ is $C^1$.  
\end{lemma}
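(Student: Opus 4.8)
The plan is to factor $Gr_l$, near each point of $\cT_H$, as a composition of Bonahon's real-analytic shear chart with his differentiable map from shear-bend cocycles to complex projective structures. Since being $C^1$ is a local property and $\CP$ carries a well-defined smooth structure, it suffices to prove that $Gr_l$ is $C^1$ in a neighbourhood of an arbitrary $m'\in\cT_H$.

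Fix once and for all a maximal geodesic lamination $\lambda$ containing the support of $l$, and let $\lb\in\cH(\lambda,\R/2\pi\Z)$ be the reduction of $l$. By Bonahon's shearing construction \cite{bonahon-toulouse}, the shear coordinates along $\lambda$ give a real-analytic diffeomorphism $s_\lambda\colon\cT_H\to C_\lambda$ onto an open subset $C_\lambda\subset\cH(\lambda,\R)$, where $s_\lambda(m')$ records the shearing of the metric $m'$ along $\lambda$. On the other hand, for every $m'\in\cT_H$ the metric boundary $\dr_0M'$ of the hyperbolic end determined by $(m',l)$ is a convex pleated surface whose shear-bend cocycle is $s_\lambda(m')+i\lb\in\cH(\lambda,\C/2\pi i\Z)$: its real part is the shear coordinate of the induced metric $m'$, and its imaginary part is the (fixed) bending measure. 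As recalled just before Theorem \ref{tm:bonahon}, there is a differentiable map $\Sigma_\lambda$, defined on a neighbourhood of $s_\lambda(m')+i\lb$ in $\cH(\lambda,\C/2\pi i\Z)$, sending a complex transverse cocycle $h$ to the $\C P^1$-structure $\Sigma_\lambda(h)$ at infinity of the corresponding hyperbolic end.

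By Thurston's description of the grafting map in terms of hyperbolic ends (as used above for $\dr_\infty M$), one has, for $m''$ in a neighbourhood of $m'$,
$$ Gr_l(m'') = \Sigma_\lambda\bigl(s_\lambda(m'')+i\lb\bigr)~. $$
Since $s_\lambda$ is real-analytic and $\Sigma_\lambda$ is differentiable, $Gr_l$ is $C^1$ near $m'$; as $m'$ was arbitrary, $Gr_l$ is $C^1$ on $\cT_H$. Unwinding the definitions, this is precisely Lemma 13 of \cite{bonahon-variations}, which isolates the first-order variation $\phi_\lambda\circ\psi_\lambda^{-1}(\md,0)$ of the $\C P^1$-structure corresponding to a deformation of $m$ alone.

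The real content --- and the place where this reduction would fail if one had only the soft ``tangentiability'' statements --- is the differentiability of the map $\Sigma_\lambda$ from shear-bend cocycles to complex projective structures. This combines two ingredients from \cite{bonahon-variations}: first, the holonomy in $PSL(2,\C)$ of a pleated surface with pleating locus in $\lambda$ is real-analytic in its shear-bend cocycle, being assembled from explicit products of elementary shear and bend matrices along the leaves and complementary regions of $\lambda$; second, near a convex pleated surface this holonomy is the holonomy of a complex projective structure, which can be recovered from it because the holonomy map from $\CP$ to the $PSL(2,\C)$-character variety is a local homeomorphism. Together these give that $\Sigma_\lambda(h)$ depends $C^1$ on $h$, with first-order variation complex-linear in $h$ by Theorem \ref{tm:bonahon}. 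Keeping $l$ fixed is decisive here: although $\ML$ carries no good differentiable structure, freezing the bending part $\lb$ removes the problematic directions and leaves only the shear parameters, in which everything is real-analytic. No compatibility between the local maps $\Sigma_\lambda$ attached to different base points is needed for this lemma; that refinement --- the independence of the resulting tangent map on the auxiliary lamination --- is what is used afterwards, in the analysis of $Gr\circ\delta^{-1}$.
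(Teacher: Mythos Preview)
Your proposal is correct and matches the paper's approach: the paper does not give an independent proof of this lemma but simply states that it is ``clearly equivalent to Lemma 13 in \cite{bonahon-variations}''. You spell out the same reduction --- factoring $Gr_l$ through the real-analytic shear chart $s_\lambda$ and Bonahon's map $\Sigma_\lambda$ from shear-bend cocycles to $\CP$ --- and correctly note that the key input is the real-analyticity of the holonomy in the shear-bend parameters together with the fact that the holonomy map $\CP\to\chi(PSL(2,\C))$ is a local biholomorphism, which upgrades ``differentiable'' to $C^1$ (indeed real-analytic).
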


We now focus on $\ld$, and on the corresponding imaginary
part $h_1$ of the transverse cocycle $h$. 
We have already recalled that $L_m(h_1)=L_m(l)'$, where the prime denotes the
first-order variation under the tangent vector to $\ML$ at $l$ corresponding to $h_1$.
It follows that for any $m'\in T_m\cT_H$, $\ud(m')=dL_\cdot(h_1)(m')$. 
However it was proved in \cite{bonahon-toulouse} that 
$dL_\cdot(h_1)(m')=\omega_{WP}(m',e_{h_1})$, where $\omega_{WP}$ is the Weil-Petersson
symplectic form on $\cT_H$ and $e_{h_1}$ is the tangent vector to $\cT_H$ at $m$ 
corresponding to the infinitesimal shear along $h_1$. 
So $e_{h_1}$ is the dual of 
$\ud$ for the symplectic form $\omega_H$ on $T^*\cT_H$, we write this 
as $e_{h_1}=\ud^*$ (the star stands for the Weil-Petersson symplectic duality).

We can now apply Theorem \ref{tm:bonahon}, and conclude that 
\begin{equation}
  \label{eq:comp}
 (\phi_\lambda\circ\psi_\lambda^{-1})(0,\ud) = \phi_\lambda(ih_1)=i\phi_\lambda(h_1)
= i(\phi_\lambda\circ\psi_\lambda^{-1})(e_{h_1},0) = 
i(\phi_\lambda\circ\psi_\lambda^{-1})(\ud^*,0)=idGr_l(\ud^*)~.
\end{equation}
In particular, $(\phi_\lambda\circ\psi_\lambda^{-1})(0,\ud)$ is independent of $\lambda$ by
Lemma \ref{lm:C1},
so that $\phi_\lambda\circ\psi_\lambda^{-1}$ is linear, and therefore $Gr\circ \delta^{-1}$
is differentiable.

The fact that $Gr\circ \delta^{-1}$ is actually $C^1$ then follows from 
Theorem \ref{tm:bonahon} applied twice, once for the first-order variations
of the metric and another time, through the composition (\ref{eq:comp}),
for the first-order variation of $\ud$ (resp. $\ld$).

Note that this map $Gr\circ \delta^{-1}$ is probably not $C^2$. This is indicated
by the fact, shown by Bonahon in \cite{bonahon-variations}, that the composition
of the inverse grafting map $Gr^{-1}:\CP\rightarrow \cT_H\times \ML$ with the
projection on the first factor is $C^1$ but not $C^2$.

\section{The renormalized volume}

\subsection{Definition}
\label{ssec:renormalized}

We recall in this section, very briefly, the definition and one
key property of the renormalized volume of a quasifuchsian -- or
more generally a geometrically finite -- hyperbolic 3-manifold; more
details can be found in \cite{volume}. 
The definition can be made as follows. Let $M$ be a quasifuchsian
manifold and let $K$ be a compact subset which is {\it geodesically
convex} (any geodesic segment with endpoints in $K$ is contained
in $K$), with smooth boundary.

\begin{df}
We call
$$ W(K) = V(K) - \frac{1}{4}\int_{\dr K} H da~, $$
where $H$ is the mean curvature of the boundary of $K$. 
\end{df}

Actually 
$K$ defines a metric $I^*$ on the boundary of $M$. For $\rho>0$, let 
$S_\rho$ be the set of points at distance $\rho$ from $K$, then
$(S_\rho)_{\rho>}$ is an equidistant foliation of $M\setminus K$. It is then
possible to define a metric on $\dr M$ as
\begin{equation} \label{eq:I*}
I^* := \lim_{\rho\rightarrow \infty} 2e^{-2\rho}I_\rho~,
\end{equation}
where $I_\rho$ is the induced metric on $S_\rho$. Then $I^*$ is in the
conformal class at infinity of $M$, which we call $c_\infty$. 

Defined in this way, both $I^*$ and $W$ are functions of the convex
subset $K$. However $K$ is itself uniquely determined by $I^*$, and it 
is possible to consider $W$ as a function of $I^*$, considered as a 
metric in $\dr M$ in the conformal class at infinity $c_\infty$, although
such a metric in $c_\infty$ is not necessarily associated to a convex subset
of $M$. The reason for this is that each metric $I^*\in c_\infty$ is associated
to a unique foliation of a neighborhood of infinity in $M$ by equidistant 
convex surfaces $(S_\rho)_{\rho\geq \rho_0}$, see \cite{Eps,c-epstein,horo} or
Theorem 5.8 in \cite{volume}
(this foliation does not always extend
to $\rho\rightarrow 0$, which would mean that it is the equidistant foliation 
from a convex subset with boundary $S_0$). 

To understand the construction of $W$ in this setting we need to revert to 
another definition of the renormalized volume as it is defined for higher-dimensional
conformally compact Einstein manifolds. If $V_\rho$ is the volume of the set of
points of $M$ at distance at most $\rho$ from $K$, then $V_\rho$ behaves as 
$\rho\rightarrow \infty$ as
$$ V_\rho = V_2e^{2\rho} + V_1\rho + V_0 + \epsilon(\rho)~, $$
where $\lim_0\epsilon=0$. It is proved by C. Epstein in \cite{epstein-duke} 
(see also \cite{minsurf}) that $V_0=W$ (as defined above) is equal to $V_0$,
while $V_1$ depends only on the topology of $M$ (it is equal to $-\pi \chi(\dr M)$).
Suppose now that $K$ is replaced by 
$$ K_{r} = \{ x\in M ~ | ~ d(K,x)\leq r\}~. $$
Let $\Vb_\rho$ be the volume of the set of points at distance at most $\rho$
from $K_r$, then clearly
$$ \Vb_\rho = V_{\rho+r} = V_2e^{2(\rho+r)} + V_1(\rho+r) + V_0 = (V_2e^{2r}) 
e^{2\rho} + V_1 \rho + (V_0+V_1r) + \epsilon(\rho)~, $$
so that $V_0$ is replaced by $\Vb_0=V_0+V_1r$. This means that $W$ can be
read off from any of the surfaces $V_\rho$, since, for any $\rho>0$, 
$$ W = V_\rho -\frac{1}{4}\int_{S_\rho} H da + \pi \chi(\dr M) \rho~. $$

Starting from a metric $I^*$ in the conformal class at infinity $c_\infty$,
there is an associated equidistant foliation by convex surfaces 
$(S_\rho)_{\rho\geq \rho_0}$ of a
neighborhood of infinity in $M$, and the previous formula can be used to
define $W$ even if the foliation does not extend to $\rho\rightarrow 0$.
As a consequence, $W$ defines a function, still called 
$W$, which, to any metric $I^*\in c_\infty$, associates a real number
$W(I^*)$. 

\begin{lemma}[Krasnov \cite{Holography}, Takhtajan, Teo \cite{takhtajan-teo}, 
see also \cite{TZ-schottky}]
Over the space of metrics $I^*\in c_\infty$ of fixed area, $W$ has
a unique maximum, which is obtained when $I^*$ has constant
curvature.
\end{lemma}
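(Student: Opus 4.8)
The plan is to reduce the statement to a standard variational calculus in the conformal class $c_\infty$ and apply a uniqueness argument. First I would fix the conformal structure $c_\infty$ on $\dr M$ and parametrize the metrics $I^*\in c_\infty$ of fixed area, say area $A$, by writing $I^* = e^{2u} I^*_0$ for a fixed background metric $I^*_0\in c_\infty$, with the constraint $\int_{\dr M} e^{2u}\, da_0 = A$. The key input is the explicit first variation formula for $W$ as a functional of $I^*$: differentiating the relation $W = V_\rho - \frac14\int_{S_\rho} H\, da + \pi\chi(\dr M)\rho$ together with the Schl\"afli-type formula for $dV_\rho$ (the polyhedral formula \eqref{eq:schlafli} in its smooth Gauss--Bonnet-corrected form, i.e. $dV = -\frac14\int_{\dr}(dH + H_{,t})\,da$ type identities for equidistant foliations) yields a clean expression. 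Concretely, one gets that the differential of $W$ along a variation $\dot u$ of the conformal factor is, up to a universal constant, $\int_{\dr M} \dot u\, K_{I^*}\, da_{I^*}$, where $K_{I^*}$ is the curvature of the metric $I^*$ — this is precisely the content used in \cite{Holography,takhtajan-teo} and is where the renormalized volume behaves like (a close cousin of) the Liouville action.

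Second, given that variational formula, I would set up the constrained critical point problem: a metric $I^*$ of fixed area $A$ is critical for $W$ if and only if the Euler--Lagrange equation $K_{I^*} = \text{const}$ holds (the Lagrange multiplier for the area constraint forces the constancy of the curvature), i.e. exactly when $I^*$ has constant curvature. By uniformization, there is exactly one such metric in $c_\infty$ of area $A$ — the constant-curvature representative of $c_\infty$, rescaled to area $A$. So there is a unique critical point. Third, to upgrade ``unique critical point'' to ``unique maximum'', I would compute the second variation of $W$ at (or the convexity of $-W$ along) the constant-curvature metric, or — more robustly — invoke the global structure: on the space of conformal factors with fixed area, the functional $W$ (after subtracting the topological term) is, up to sign and constant, the Liouville-type functional whose Hessian is governed by the operator $-\Delta_{I^*} - 2K_{I^*}$ acting on the $L^2$-orthogonal complement of the constraint directions; when $K_{I^*}<0$ (genus $\geq 2$) this operator is positive on that complement, giving strict concavity of $W$ and hence that the critical point is the unique maximum. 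Alternatively, one cites the properness / coercivity of the Liouville functional directly, which immediately gives existence of a maximum, and combines it with the uniqueness of critical points just established.

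The main obstacle I expect is making the first-variation formula $dW = c\int \dot u\, K_{I^*}\, da_{I^*}$ rigorous and correctly normalized: this requires care with the $\rho$-independence of $W$, with the asymptotic expansion of $I_\rho$ and $H$ in powers of $e^{-2\rho}$ (the Fefferman--Graham / Epstein expansion), and with the fact that varying $I^*$ moves the whole equidistant foliation. One must also verify that the foliation $(S_\rho)_{\rho\geq\rho_0}$ exists and depends suitably nicely on $I^*$ even when it does not extend to $\rho\to 0$, which is exactly the subtlety flagged in the text (referencing \cite{Eps,c-epstein,horo} and Theorem 5.8 of \cite{volume}). Once the variational formula is in hand, the rest — uniformization for uniqueness of the critical point, and convexity of the Liouville functional in the negatively curved range for the maximum property — is standard; the cited references \cite{Holography,takhtajan-teo,TZ-schottky} already contain the needed computations, so in practice the proof would largely consist of quoting them and checking the area-constraint bookkeeping.
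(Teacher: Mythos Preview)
The paper does not give its own proof of this lemma: it is stated with attribution to \cite{Holography}, \cite{takhtajan-teo}, and \cite{TZ-schottky}, and the analogous statement for hyperbolic ends in section~4 is likewise deferred to section~7 of \cite{volume}. Your outline is essentially the argument carried out in those references. In the paper's own language, the first-variation step you describe is Lemma~\ref{lm:dW} specialized to variations within the conformal class: writing $dI^*=2\dot u\,I^*$ and using the modified Gauss equation $H^*=\tr_{I^*}\II^*=-K^*$ together with the Codazzi equation for $\II^*$ (both stated just after Lemma~\ref{lm:dW}), the Euler--Lagrange equation under the area constraint becomes $K^*=\mathrm{const}$, exactly as you say. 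The remaining steps --- uniqueness of the constant-curvature representative by uniformization, and concavity via the Hessian of the Liouville-type functional in the negatively curved regime --- are standard and match the cited sources. So your proposal agrees with the intended argument; there is nothing further in the paper itself to compare against beyond the variational formula it records.
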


This, along with the Bers double uniformization theorem, defines a function 
$V_R:\cT(\dr M)\rightarrow \R$, sending a conformal structure on the boundary
of $M$ to the maximum value of $W(I^*)$ when $I^*$ is in the fixed conformal
class of metrics and is restricted to have area equal to $-2\pi\chi(\dr M)$. 
This number $V_R$ is called the renormalized volume of $M$.

\subsection{The first variation of the renormalized volume}

The first variation of the renormalized volume involves a kind of Schl\"afli
formula, in which some terms appear that need to be defined. One such term
is the second fundamental form at infinity $\II^*$ associated to an equidistant
foliation in a neighbourhood of infinity, as in the previous subsection. The
definition comes from the following lemma, taken from \cite{volume}. 

\begin{lemma} \label{lm:II*}
Given an equidistant foliation as above, there is a unique bilinear symmetric
2-form $\II^*$ on $\dr M$ such that, for $\rho\geq \rho_0$,
$$ I_\rho = \frac{1}{2}(e^{2\rho}I^* + 2 \II^* + e^{-2\rho}\III^*)~, $$
where $\III^*=\II^* I^{-1} \II^*$, that is, $\III^* = I^*(B^*\cdot, B^*\cdot)$
where $B^*:T\dr M\rightarrow T\dr M$ is the bundle morphism, self-adjoint for 
$I^*$, such that $\II^* = I^*(B^*\cdot, \cdot)$. 
\end{lemma}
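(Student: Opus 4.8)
**Proof proposal for Lemma \ref{lm:II*}.**

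The plan is to work directly with the geometry of the equidistant foliation $(S_\rho)_{\rho\geq\rho_0}$ and to determine $\II^*$ by writing the Riccati-type ODE governing how the induced metric and shape operator evolve along the normal flow, then solving it explicitly. First I would recall that in a hyperbolic $3$-manifold, if $(S_\rho)$ is a foliation by surfaces equidistant at signed distance $\rho$ from a fixed surface $S_{\rho_0}$, with $B_\rho:TS_\rho\to TS_\rho$ the shape operator of $S_\rho$ (with respect to the unit normal pointing towards infinity), then the first variation formulas read
\begin{equation*}
\frac{d}{d\rho} I_\rho = 2\, I_\rho(B_\rho\cdot,\cdot)~, \qquad
\frac{d}{d\rho} B_\rho = -B_\rho^2 + \mathrm{Id}~,
\end{equation*}
the second being the Riccati equation for the hyperbolic metric (curvature $-1$). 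I would then conjugate everything back to a fixed bundle $T\dr M$ using the identification of each $S_\rho$ with $\dr M$ along the flow, so that $I_\rho$, $B_\rho$ become a one-parameter family of symmetric forms / endomorphisms of the fixed surface.

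Next I would integrate the Riccati equation. Writing $B_\rho = (\mathrm{Id} - C_\rho)(\mathrm{Id}+C_\rho)^{-1}$ for an endomorphism $C_\rho$, or more directly noting that for the equidistant foliation coming from a metric at infinity one expects $B_\rho \to \mathrm{Id}$ as $\rho\to\infty$, the general solution has the form
\begin{equation*}
B_\rho = (e^{2\rho} E - e^{-2\rho} F)(e^{2\rho} E + e^{-2\rho} F)^{-1}
\end{equation*}
for fixed self-adjoint endomorphisms $E,F$ built from the initial data at $\rho_0$. Plugging this into $\frac{d}{d\rho}I_\rho = 2I_\rho(B_\rho\cdot,\cdot)$ and integrating gives $I_\rho$ as a quadratic polynomial in $e^{\rho}$ and $e^{-\rho}$: precisely $I_\rho = \tfrac12(e^{2\rho}g_+ + 2 g_0 + e^{-2\rho}g_-)$ for symmetric forms $g_\pm, g_0$ determined by $E,F$. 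Comparing with the normalization $I^* = \lim_{\rho\to\infty} 2e^{-2\rho} I_\rho$ from \eqref{eq:I*} identifies $g_+ = I^*$; one then \emph{defines} $\II^* := g_0$ and checks from the explicit expressions that $g_- = \III^* = I^*(B^*\cdot,B^*\cdot)$ where $B^* = I^{*-1}\II^*$ in the sense of the lemma, i.e. $g_- = \II^* (I^*)^{-1}\II^*$. This last algebraic identity is exactly the statement that the three coefficient forms $g_+, g_0, g_-$ are not independent but satisfy $g_0 g_+^{-1} g_0 = g_-$, which falls out of the fact that $B_\rho$ solves the Riccati equation (equivalently, that the ``matrix'' $\begin{pmatrix} g_+ & g_0\end{pmatrix}$ comes from a single $E,F$ pair).

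Finally I would check uniqueness: since $I_\rho$ is prescribed for all $\rho\geq\rho_0$ and is a genuine (quadratic) polynomial in $e^{2\rho}$, its three coefficients are uniquely determined, so $\II^*$ is unique. Symmetry and bilinearity of $\II^*$ are inherited from those of the $I_\rho$. The main obstacle I anticipate is purely bookkeeping rather than conceptual: one must be careful that the Riccati equation is written for the correct sign of the normal (towards infinity, where the foliation is \emph{convex}), that the identification of successive leaves with the fixed surface $\dr M$ is compatible with the exponential normal flow, and that the self-adjointness of $B^*$ with respect to $I^*$ (as opposed to $I_\rho$) is the one that makes $\III^* = I^*(B^*\cdot,B^*\cdot)$ agree with the $e^{-2\rho}$ coefficient. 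Once the conventions are pinned down, the computation is the standard integration of the hyperbolic Riccati equation and the identity $g_0 g_+^{-1} g_0 = g_-$ is immediate.
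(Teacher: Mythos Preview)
Your proposal is correct and is the standard argument: solve the Riccati equation for the shape operator of the equidistant leaves, integrate to obtain $I_\rho$ as a polynomial in $e^{\pm 2\rho}$, and read off the three coefficients, checking the algebraic constraint $\III^* = \II^*(I^*)^{-1}\II^*$ from the commuting self-adjoint operators $P=\tfrac12(\mathrm{Id}+B_{\rho_0})$ and $Q=\tfrac12(\mathrm{Id}-B_{\rho_0})$ that arise from the Jacobi equation. The only places to be careful are exactly the ones you flag (sign of the normal, self-adjointness with respect to $I^*$ rather than $I_\rho$), and you have those right.

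Note, however, that the paper does not actually prove this lemma: it is quoted from \cite{volume} (``taken from \cite{volume}''), so there is nothing in the present text to compare your argument against. Your Riccati/Jacobi computation is precisely the argument carried out in that reference (see Section~5 there), so in substance your approach coincides with the original proof.
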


The first variation of $W$ under a deformation of $M$ or of the equidistant
foliation is given by another lemma from \cite{volume}, which can be seen as 
a version ``at infinity'' of the Schl\"afli formula for hyperbolic manifolds
with boundary found in \cite{sem,sem-era}.

\begin{lemma} \label{lm:dW}
Under a first-order deformation of the hyperbolic metric on $M$ or of the
equidistant foliation close to infinity, the first-order variation of $W$
is given by
$$ dW = -\frac{1}{4}\int_{\dr M} \left\langle d\II^* - \frac{H^*}{2} dI^*,I^*
\right\rangle da^*~, $$
where $H^*:=\mbox{tr}(B^*)$ and $da^*$ is the area form of $I^*$. 
\end{lemma}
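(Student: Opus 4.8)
The plan is to deduce Lemma \ref{lm:dW} from the classical Schl\"afli formula for compact hyperbolic $3$-manifolds with smooth boundary, by carrying it to the ideal boundary along the equidistant foliation and using the asymptotic expansion of Lemma \ref{lm:II*}; this is the argument of \cite{volume}. Recall first the smooth Schl\"afli formula from \cite{sem,sem-era}: if $N$ is a compact hyperbolic $3$-manifold with smooth, strictly convex boundary and the hyperbolic structure varies in a one-parameter family (the domain being allowed to move inside a varying ambient hyperbolic manifold), then, writing $I,\II,H$ for the first and second fundamental forms and the mean curvature of $\dr N$, $da$ for the area form of $I$, and $\langle\cdot,\cdot\rangle$ for the scalar product on symmetric $2$-tensors induced by $I$ (so $\langle A,I\rangle=\tr_I A$),
$$ d\Big( V_N - \frac14\int_{\dr N} H\, da\Big) \;=\; \frac14\int_{\dr N}\Big\langle d\II - \frac{H}{2}\, dI,\ I\Big\rangle\, da~. $$
This is exactly Lemma \ref{lm:dW} with the ``honest'' boundary data replaced by its counterpart at infinity, except for an overall sign which, as explained below, is reversed by the limiting process because the roles of $e^{2\rho}$ and $e^{-2\rho}$ in Lemma \ref{lm:II*} are not symmetric.

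Next, apply this to $N=K_\rho$, whose smooth boundary is $S_\rho$, under a simultaneous first-order deformation of the hyperbolic metric on $M$ and of the equidistant foliation --- equivalently, of the metric $I^*\in c_\infty$. By the identity established in the previous subsection, $W = V_\rho - \frac14\int_{S_\rho}H\,da + \pi\chi(\dr M)\,\rho$ for every $\rho\ge\rho_0$; since $\chi(\dr M)$ is a topological constant and $\rho$ is fixed, differentiating gives $dW = d\big(V_\rho - \tfrac14\int_{S_\rho}H\,da\big)$, which by the smooth Schl\"afli formula equals $\tfrac14\int_{S_\rho}\langle d\II_\rho - \tfrac{H_\rho}{2}dI_\rho,\ I_\rho\rangle\,da_\rho$. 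In particular this integral does not depend on $\rho$, so we may evaluate it in the limit $\rho\to\infty$.

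Finally, substitute the expansions coming from Lemma \ref{lm:II*}: from $I_\rho = \tfrac12 e^{2\rho}\,I^*\big((\mathrm{Id}+e^{-2\rho}B^*)^2\,\cdot\,,\,\cdot\,\big)$ one gets, since the $S_\rho$ form a normal geodesic foliation, $\II_\rho = \tfrac12\dr_\rho I_\rho = \tfrac12(e^{2\rho}I^* - e^{-2\rho}\III^*)$, hence the shape operator of $S_\rho$ is $(\mathrm{Id}+e^{-2\rho}B^*)^{-1}(\mathrm{Id}-e^{-2\rho}B^*)$, so $H_\rho = 2 - 2e^{-2\rho}H^* + O(e^{-4\rho})$, and $da_\rho = \tfrac12 e^{2\rho}\det(\mathrm{Id}+e^{-2\rho}B^*)\,da^* = \tfrac12 e^{2\rho}\big(1+e^{-2\rho}H^*+O(e^{-4\rho})\big)da^*$. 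Plugging these, together with $d\II_\rho$ and $dI_\rho$ read off from Lemma \ref{lm:II*}, into $\tfrac14\int_{S_\rho}\langle d\II_\rho - \tfrac{H_\rho}{2}dI_\rho,\ I_\rho\rangle\,da_\rho$ and expanding in powers of $e^{-2\rho}$: the terms that do not have a finite limit must have vanishing integral (this is forced by the $\rho$-independence just observed --- equivalently they reassemble into the variation of the topological term $-\pi\chi(\dr M)\rho$), and the $\rho$-independent part is, after a direct computation on $2\times 2$ self-adjoint endomorphisms using $\langle A,I^*\rangle = \tr_{I^*}A$ and $\III^* = I^*(B^*\cdot,B^*\cdot)$, exactly $-\tfrac14\int_{\dr M}\langle d\II^* - \tfrac{H^*}{2}dI^*,\ I^*\rangle da^*$ --- with, as announced, the sign opposite to that of the finite formula.

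The main obstacle is the asymptotic bookkeeping in this last step. The conceptual key --- and the reason for routing the argument through a surface $S_\rho$ at finite distance rather than differentiating ``at infinity'' directly --- is the observation that the finite Schl\"afli expression is independent of $\rho$, which turns the cancellation of the divergent contributions into a formality rather than a delicate analytic estimate. What genuinely requires care is (i) stating the finite Schl\"afli formula in a form that lets the domain $K_\rho$ move inside the varying manifold $M$ (legitimate since $V_N-\tfrac14\int_{\dr N}H\,da$ depends only on the domain and its boundary), and (ii) the precise algebra of the $e^{\pm2\rho}$ expansion: one must check that the cross-terms between $\II^*$, $\III^*$ and the $H^*$-corrections in $H_\rho$ and $da_\rho$ combine to reproduce the coefficient $H^*/2$ and nothing more, and that the overall sign comes out negative. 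Alternatively one can bypass the quotation of the finite Schl\"afli formula and differentiate $W = V_\rho - \tfrac14\int_{S_\rho}H\,da + \pi\chi(\dr M)\rho$ directly, using the first variation of the volume (which, the metric being Einstein, converts the bulk term into a boundary integral by the Bianchi--Stokes argument underlying Schl\"afli's formula) together with the first variation of the mean-curvature integral; but this amounts to re-deriving the same formula.
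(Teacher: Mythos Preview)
The paper does not prove this lemma in the present text: it is quoted from \cite{volume}, with the remark that it ``can be seen as a version `at infinity' of the Schl\"afli formula for hyperbolic manifolds with boundary found in \cite{sem,sem-era}.'' Your proposal is precisely a reconstruction of that argument from \cite{volume}: apply the finite Schl\"afli formula to $K_\rho$, observe that the resulting expression equals $dW$ and is therefore $\rho$-independent, and read off the constant term of the asymptotic expansion furnished by Lemma \ref{lm:II*}. The expansions you write for $I_\rho,\II_\rho,B_\rho,H_\rho,da_\rho$ are correct, the pointwise cancellation of the leading $e^{2\rho}$-term in $d\II_\rho-\tfrac{H_\rho}{2}dI_\rho$ is genuine, and the sign flip you identify does occur, coming from the fact that $\II_\rho=\tfrac12(e^{2\rho}I^*-e^{-2\rho}\III^*)$ so that the first subleading contribution to $d\II_\rho-\tfrac{H_\rho}{2}dI_\rho$ is $-\big(d\II^*-\tfrac{H^*}{2}dI^*\big)$. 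So your outline matches the source argument and is correct.
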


The ``second fundamental form at infinity'', $\II^*$, is actually quite 
similar to the usual second fundamental form of a surface. It satisfies the
Codazzi equation 
$$ d^{\nabla^*}\II^* =0~, $$
where $\nabla^*$ is the Levi-Civit\`a connection of $I^*$, as well as a 
modified form of the Gauss equation,
$$ \mbox{tr}_{I^*}(\II^*) = - K^*~, $$
where $K^*$ is the curvature of $I^*$. The proof can again be found in 
\cite{volume} (section5). A direct consequence is that, if $I^*$ has constant curvature
$-1$, the trace-less part $\II^*_0$ of $\II^*$ is the real part of a
holomorphic quadratic differential on $\dr M$ for the complex structure
of $I^*$. In addition, the first-order variation of $V_R$ follows from
Lemma \ref{lm:dW}.

\begin{lemma} \label{lm:dVR}
In a first-order deformation of $M$,
$$ dV_R = -\frac{1}{4} \int_{\dr M} \langle dI^*,\II^*_0\rangle da^*~. $$
\end{lemma}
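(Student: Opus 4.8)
The plan is to deduce the variation of $V_R$ from Lemma \ref{lm:dW} by specializing the equidistant foliation to the canonical one coming from the constant-curvature representative of the conformal class, and then tracking how the two sources of variation — the deformation of $M$ and the change of this canonical metric within the (varying) conformal class — contribute to $dV_R$. First I would recall that $V_R(M)$ is, by definition, the value of $W(I^*)$ at the maximizer $I^*$ over metrics of fixed area $-2\pi\chi(\dr M)$ in the conformal class $c_\infty$, and that this maximizer is exactly the metric of constant curvature $-1$ (after the area normalization, Gauss-Bonnet forces $K^*\equiv -1$). So along a one-parameter family $M_t$ we have $V_R(M_t) = W(I^*_t)$ where $I^*_t$ is the hyperbolic metric in $c_\infty(M_t)$.

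Next I would differentiate. Because $I^*_t$ is the \emph{critical point} of $W$ restricted to the area-normalized metrics in a fixed conformal class, the variation of $W$ coming from moving $I^*$ \emph{within} the conformal class (at fixed $M$) vanishes to first order — this is the maximum property in the Krasnov–Takhtajan–Teo lemma. Hence in computing $\frac{d}{dt}W(I^*_t)$ only two things survive: the explicit deformation of the hyperbolic metric on $M$, and the component of $\dot I^*$ that changes the conformal class (the Beltrami/traceless part), the latter being fed into $dW$ through the term $dI^*$ in Lemma \ref{lm:dW}. Applying Lemma \ref{lm:dW} with the constant-curvature foliation, one has $H^* = \tr(B^*) = \tr_{I^*}(\II^*) = -K^* = 1$, and the Codazzi equation $d^{\nabla^*}\II^*=0$ together with $K^*\equiv -1$ forces $\II^*$ to have the form $\II^* = \II^*_0 + \frac12 I^*$ with $\II^*_0$ the real part of a holomorphic quadratic differential. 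I would substitute this into
$$ dW = -\frac14 \int_{\dr M}\left\langle d\II^* - \frac{H^*}{2}dI^*, I^*\right\rangle da^* $$
and split $d\II^* = d\II^*_0 + \frac12 dI^*$. The $-\frac{H^*}{2}dI^* = -\frac12 dI^*$ term then cancels the $\frac12 dI^*$ coming from $d\II^*$, leaving $dW = -\frac14\int_{\dr M}\langle d\II^*_0, I^*\rangle da^*$. Since $\II^*_0$ is traceless, $\langle \II^*_0, I^*\rangle = 0$ pointwise at every $t$, so $\int \langle d\II^*_0, I^*\rangle da^* = -\int\langle \II^*_0, dI^*\rangle da^*$ up to the variation of the pairing and of $da^*$; but $da^*$ is the hyperbolic area form of a surface of total area $-2\pi\chi$, which is fixed, and its pointwise variation pairs against $\langle\II^*_0,I^*\rangle=0$. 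Carefully collecting, one obtains $dV_R = dW = -\frac14\int_{\dr M}\langle dI^*,\II^*_0\rangle da^*$.

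The main obstacle I expect is the bookkeeping in the last step: justifying rigorously that the terms involving $\dot I^*$ \emph{tangent} to the conformal class drop out (this is where the maximum property must be invoked cleanly, rather than hand-waved), and handling the interplay between the variation of the metric $I^*$, the variation of the area form $da^*$, and the variation of the raised/lowered indices in $\langle\cdot,\cdot\rangle$, all while $\II^*_0$ is only the traceless part. The cleanest route is probably to fix the area normalization once and for all, use that $\tr_{I^*}\II^* = 1$ is constant in $t$ so that $\langle dI^*, \II^*\rangle$ differs from $\langle dI^*,\II^*_0\rangle$ by $\frac12\langle dI^*, I^*\rangle = \frac12 \,\tr_{I^*}(\dot I^*)\,da^*$, and then observe that $\int \tr_{I^*}(\dot I^*)\,da^* = 2\frac{d}{dt}(\mathrm{Area}) = 0$ by the normalization — which is exactly the mechanism that makes the otherwise-dangerous trace term harmless. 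With that identity in hand the lemma drops out of Lemma \ref{lm:dW} directly.
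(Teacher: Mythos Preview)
Your argument is correct and matches the paper's approach: the text simply states that Lemma \ref{lm:dVR} follows from Lemma \ref{lm:dW}, and you have supplied the omitted computation (specialize to $K^*\equiv -1$, use $H^*=1$ and $\II^*=\II^*_0+\tfrac12 I^*$, then simplify). One minor simplification worth noting: differentiating the pointwise identity $\tr_{I^*}\II^*_0=0$ already gives $\langle d\II^*_0, I^*\rangle = \langle dI^*, \II^*_0\rangle$ at each point, so the integral/area-normalization bookkeeping in your final paragraph is not actually needed, and the critical-point argument, while correct, is likewise redundant once you plug the full variation of the hyperbolic $I^*$ into Lemma \ref{lm:dW}.
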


This statement is very close in spirit to Lemma \ref{lm:bs-dual}, with the
dual volume of the convex core replaced by the renormalized volume. The
right-hand term is, up to the factor $-1/4$, the Liouville form on the
cotangent bundle $T^*\cT_C(\dr M)$. 

\begin{proof}[A simple proof of Theorem \ref{tm:mcm}]
We have just seen that $dV_R$ coincides (up to the constant $-1/4$)
with the Liouville form of
$T^*\cT_C(\dr M)$ on $j(\cG)$. It follows that the symplectic form of 
$ T^*\cT_C(\dr M)$ vanishes on $j(\cG(\dr M))$, which is precisely the statement
of the theorem.
\end{proof}


\section{The relative volume of hyperbolic ends}

\subsection{Definition}
\label{ssec:relative}

We consider in this part yet another notion of volume, defined for
(geometrically finite) hyperbolic ends rather than for hyperbolic manifolds. 
Here we consider a hyperbolic end $M$. 
The definition of the renormalized volume can be used in this setting, 
leading to the relative volume of the end. We will write that a geodesically
convex subset $K\subset M$ is a {\it collar} if it is relatively compact and 
contains the metric boundary $\dr_0M$ of $M$ (possibly all geodesically 
convex relatively compact subsets of $M$ are collars, but it is not necessary
to consider this question here). Then $\dr K\cap M$ is a locally convex surface in $M$.

The relative volume of $M$ is related both to the (dual) volume of the convex
core and to the renormalized volume; it is defined as the renormalized volume,
but starting from the metric boundary of the hyperbolic end. We follow
the same path as for the renormalized volume and start from a collar $K\subset M$. We set 
$$ W(K) = V(K) - \frac{1}{4}\int_{\dr K} H da + 
\frac{1}{2} L_\mu(\lambda)~, $$
where $H$ is the mean curvature of the boundary of $K$, $\mu$ is the 
induced metric on the metric boundary of $M$, and $\lambda$ is its
measured bending lamination.

As for the renormalized volume we define the metric at infinity as
$$ I^* := \lim_{\rho\rightarrow \infty} 2e^{-2\rho}I_\rho~, $$
where $I_\rho$ is the set of points at distance $\rho$ from $K$.
The conformal structure of $I^*$ is equal to the canonical conformal 
structure at infinity $c_\infty$ of $M$.

Here again, $W$ only depends on $I^*$. Not all metrics in $c_\infty$ can 
be obtained from a compact subset of $E$, however all metrics do define
an equidistant foliation close to infinity in $E$, and it still possible
to define $W(I^*)$ even when $I^*$ is not obtained from a convex subset of
$M$. So $W$ defines a function,
still called $W$, from the conformal class $c_\infty$ to $\R$. 

\begin{lemma}
For fixed area of $I^*$, $W$ is maximal exactly when $I^*$ has
constant curvature.  
\end{lemma}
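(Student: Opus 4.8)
The plan is to reduce this statement to the analogous result already established for the renormalized volume in Section 3, the only difference being the presence of the extra boundary term $\frac12 L_\mu(\lambda)$ coming from the metric boundary of the end. First I would note that, by the very definition of $W$ for the hyperbolic end, $W(I^*) = W_R(I^*) + \frac12 L_\mu(\lambda)$, where $W_R$ is the quantity defined exactly as in Section 3 from the equidistant foliation near infinity (via the formula $W = V_\rho - \frac14\int_{S_\rho} H\, da + \pi\chi(\dr M)\rho$), and where $\mu$, $\lambda$ are the induced metric and bending lamination of $\dr_0 M$. The key observation is that this additional term $\frac12 L_\mu(\lambda)$ depends only on the hyperbolic end $M$ itself --- equivalently on the point $(\mu,\lambda)\in\cT_H\times\ML$ --- and not at all on the choice of metric $I^*$ within the fixed conformal class $c_\infty$. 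Indeed, varying $I^*$ within $c_\infty$ changes only the equidistant foliation near infinity and the convex subset it may or may not bound; it does not change the underlying hyperbolic structure on $M$, hence does not change $\dr_0 M$ or its pleating data.

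Granting this, the variational problem ``maximize $W(I^*)$ over metrics $I^*\in c_\infty$ of fixed area'' has exactly the same critical points and the same maximizer as ``maximize $W_R(I^*)$'', since the two functionals differ by a constant on the relevant domain. Therefore I would invoke the corresponding statement from Section 3: the computation underlying Lemma \ref{lm:dW} shows that, for variations $dI^*$ of the metric at infinity with the hyperbolic end $M$ held fixed, $dW = -\frac14\int_{\dr M}\langle d\II^* - \frac{H^*}{2} dI^*, I^*\rangle\, da^*$. Restricting to area-preserving variations within $c_\infty$, i.e. writing $dI^* = 2\dot u\, I^*$ with $\int_{\dr M}\dot u\, da^* = 0$, the first-order condition forces $H^*$ to be constant, which by the modified Gauss equation $\tr_{I^*}(\II^*) = -K^*$ is equivalent to $K^*$ being constant; combined with the Gauss--Bonnet theorem and the fixed-area normalization this pins down $K^* \equiv -1$, i.e. $I^*$ has constant curvature. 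Existence and uniqueness of the maximizer then follow from the Bers double uniformization theorem exactly as in the quasifuchsian case, the constant-curvature metric in $c_\infty$ being unique up to scale and the area normalization removing the scaling ambiguity.

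The main obstacle --- and the only point that genuinely requires the ``hyperbolic end'' setting rather than a verbatim copy of Section 3 --- is justifying that the Schl\"afli-type formula for $dW$ near infinity is unaffected by the presence of the metric boundary $\dr_0 M$, and that one is genuinely allowed to vary the foliation near infinity while keeping the end, and in particular $\dr_0 M$, fixed. This is where one must be careful that the equidistant foliation associated to a metric $I^*\in c_\infty$ via \cite{Eps,c-epstein,horo} (or Theorem 5.8 in \cite{volume}) always exists on a neighbourhood of infinity in the end, even when it does not extend all the way down to a convex collar containing $\dr_0 M$; this is exactly the content of the remarks preceding the lemma, so no new work is needed beyond citing it. Once this is in place, Lemma \ref{lm:dW} applies word for word to the neighbourhood of infinity, the extra term $\frac12 L_\mu(\lambda)$ contributes nothing to $dW$ under variations of $I^*$, and the argument closes.
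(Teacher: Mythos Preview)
Your proposal is correct and follows essentially the same approach as the paper: the paper simply observes that the argument from \cite{volume} ``takes place entirely on the boundary at infinity,'' which is exactly your point that the extra term $\tfrac12 L_\mu(\lambda)$ is constant in $I^*$ so the variational analysis near infinity is unchanged. One minor slip: the existence and uniqueness of the constant-curvature metric in $c_\infty$ is just the classical uniformization theorem for the closed surface $\partial_\infty M$, not the Bers double uniformization theorem (which in Section~3 is invoked only to parametrize quasifuchsian manifolds, not to locate the maximizer).
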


The proof follows directly from the arguments used in \cite{volume} (section 7)
so we do not repeat the proof here. This proof takes place entirely on the
boundary at infinity so that considering a hyperbolic end or a geometrically
finite hyperbolic manifold has no impact.

\begin{df}
The relative volume $V_R$ of $M$ is  $W(I^*)$ when $I^*$ is the
hyperbolic metric in the conformal class at infinity on $M$.
\end{df}

\subsection{The first variation of the relative volume}

\begin{prop} \label{pr:relative}
Under a first-order variation of the hyperbolic end, the first-order variation
of the relative volume is given by
\begin{equation}
  \label{eq:s-rel}
  V'_R = \frac{1}{2} L'_\mu(\lambda) - \frac{1}{4} \int_{\dr_\infty E} 
\langle I^*{}',\II^*_0 \rangle da^*~.
\end{equation}
\end{prop}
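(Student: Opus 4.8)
The plan is to combine the two Schläfli-type formulas already available in the excerpt, treating the relative volume as a hybrid of the dual volume of the convex core and the renormalized volume. Recall that for a collar $K\subset M$ the relative volume quantity is
$$ W(K) = V(K) - \frac{1}{4}\int_{\dr K} H\, da + \frac{1}{2} L_\mu(\lambda)~, $$
and that $V_R = W(I^*)$ when $I^*$ is the hyperbolic metric in $c_\infty$. The idea is to differentiate $W$ under a first-order deformation of the hyperbolic end (allowing both the metric and the equidistant foliation near infinity to move). The term $V(K) - \frac{1}{4}\int_{\dr K} H\, da$ is exactly the quantity called $W(K)$ in Section 3, whose variation is governed by Lemma \ref{lm:dW}: under a deformation of $M$ or of the equidistant foliation close to infinity, its first-order variation is $-\frac{1}{4}\int_{\dr M}\langle d\II^* - \frac{H^*}{2}dI^*, I^*\rangle da^*$. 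When we then impose that $I^*$ stays the hyperbolic metric in the (moving) conformal class — the same normalization used to pass from Lemma \ref{lm:dW} to Lemma \ref{lm:dVR} — the $d\II^*$ term and the trace part drop out, leaving $-\frac{1}{4}\int_{\dr_\infty E}\langle I^*{}', \II^*_0\rangle da^*$, which is the second term in \eqref{eq:s-rel}. The remaining term $\frac{1}{2}L_\mu(\lambda)$ contributes $\frac{1}{2}L'_\mu(\lambda)$ directly (here $\lambda$, the bending lamination of the pleated metric boundary, is \emph{fixed} along the deformation only in the sense relevant to the $L'_\mu$ notation of Lemma \ref{lm:bs-dual}; more precisely one applies Lemma \ref{lm:bs-dual}'s underlying 2-dimensional identity \eqref{eq:der} to see which pieces survive).

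More carefully, I would organize the argument in three steps. First, justify that $W(K)$ as defined for the end is independent of the collar $K$ and depends only on $I^*$, by the same equidistant-foliation rescaling argument used in Section 3 (the surfaces $S_\rho$ foliating a neighbourhood of $\dr_\infty M$ read off the same $W$, with the $\pi\chi(\dr M)\rho$ correction), noting that the extra term $\frac{1}{2}L_\mu(\lambda)$ is a quantity living on the metric boundary and so is unaffected by the choice of collar near infinity. Second, vary: write $V_R' = \frac{d}{dt}\big[V(K_t) - \frac14\int_{\dr K_t} H\, da + \frac12 L_{\mu_t}(\lambda_t)\big]$ and split it as (variation of the renormalized-volume part) $+ \frac12 L'$. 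The renormalized-volume part is handled by Lemma \ref{lm:dW} followed by the normalization $I^* = $ hyperbolic metric, exactly as in the proof of Lemma \ref{lm:dVR}; this is purely ``at infinity'' and, as remarked after the area-maximization lemma for ends, is insensitive to whether the ambient object is an end or a geometrically finite manifold. Third, identify $\frac12\frac{d}{dt}L_{\mu_t}(\lambda_t)$ with $\frac12 L'_\mu(\lambda)$ in the notation of \eqref{eq:der}: by Lemma \ref{lm:bs-dual} (and the decomposition \eqref{eq:der}, which is legitimate since $L:\cT\times\ML\to\R$ has the claimed directional derivatives), $\frac{d}{dt}L_{\mu_t}(\lambda_t) = L'_\mu(\lambda) + L_\mu(\lambda')$; but the pleated metric boundary of a hyperbolic end has its metric $\mu$ and lamination $\lambda$ determined by the end, and the $L_\mu(\lambda')$ piece is precisely what Bonahon's Schläfli formula (Lemma \ref{lm:bs}) would contribute as $2\,dV_C$ — which here is \emph{absent} because the relative volume $W(K)$ does not include the volume of the convex core, only the region between $\dr_0 M$ and $\dr K$. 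So only the $L'_\mu(\lambda)$ term survives, with coefficient $\frac12$.

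The main obstacle I expect is bookkeeping at the \emph{metric boundary} $\dr_0 M$: the collar $K$ contains $\dr_0 M$, which is a pleated (hence non-smooth) surface, so applying the smooth Schläfli formula of Lemma \ref{lm:dW} requires care near that boundary. The clean way around this is to run the same limiting argument Bonahon uses in \cite{bonahon} — approximate the pleated boundary by smooth convex surfaces, or equivalently push $\dr_0 M$ slightly into $M$ to a smooth equidistant surface, apply the smooth Schläfli formula on the resulting smooth-boundary region, and take the limit; the boundary contributions at $\dr_0 M$ then assemble into exactly the $\frac12 L_\mu(\lambda)$-type term (this is where the ``$+\frac12 L_\mu(\lambda)$'' in the definition of $W(K)$ comes from in the first place, mirroring the $-\frac12 L_e\theta_e$ correction in the polyhedral dual volume $V^*$). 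Once this limiting argument is in place, the two Schläfli formulas glue with no interaction between the metric boundary and the boundary at infinity, and \eqref{eq:s-rel} follows. I would also double-check the sign and the factor on the $L'_\mu(\lambda)$ term against the dual polyhedral formula \eqref{eq:schlafli-dual}: there $dV^* = -\frac12\sum_e\theta_e\, dL_e$, whereas here we get $+\frac12 L'_\mu(\lambda)$; the sign flip is accounted for by the fact that the relative volume uses the metric boundary as an ``inner'' rather than ``outer'' boundary of the region being measured, i.e. the end lies on the concave side, so the orientation of $\dr_0 M$ as a boundary of the collar is reversed relative to the convex-core situation.
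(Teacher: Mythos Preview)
Your overall strategy---combine a Bonahon-type Schl\"afli contribution at the pleated boundary $\dr_0M$ with a renormalized-volume Schl\"afli contribution at $\dr_\infty M$---is exactly the paper's. The paper, however, makes the combination explicit by inserting a \emph{polyhedral} interface surface: it splits the end into a polyhedral collar $C$ (bounded by $\dr_0M$ and a polyhedral surface) and its complement $D$, proves separate variation formulas for each (Corollary~\ref{cr:collar} gives ${V^*}'(C)=\frac12\sum_e L_e\theta_e'+\frac12 L'_\mu(\lambda)$, Lemma~\ref{lm:complement} gives $V_R(D)'=-\frac14\int\langle \II^*_0,{I^*}'\rangle da^*+\frac12\sum_e L_e\theta_e'$ with the opposite exterior angles), and then adds them so the edge terms cancel. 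This is cleaner than your smooth-collar route because both pieces have a boundary component of a type for which a Schl\"afli formula is already available.

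Your version has a genuine gap in the bookkeeping at $\dr_0M$. You cannot apply Lemma~\ref{lm:dW} to $V(K)-\frac14\int_{\dr K}Hda$ and get \emph{only} the at-infinity term: $K$ has two boundary components, and the variation of $V(K)$ picks up a contribution from the pleated one as well. That contribution is precisely $-\frac12 L_\mu(\lambda')$ (this is the content of Lemma~\ref{lm:collar}, specialized to the pleated side; the sign comes from the orientation you correctly note). Consequently your ``third step'' misidentifies the mechanism: the term $\frac12 L_\mu(\lambda')$ coming from $\frac12(L_\mu(\lambda))'=\frac12 L'_\mu(\lambda)+\frac12 L_\mu(\lambda')$ is not ``absent because the relative volume does not include the convex core''---it is very much present, and it is cancelled by the $-\frac12 L_\mu(\lambda')$ hidden inside $V(K)'$. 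Once you recognize this, the two contributions (pleated side and infinity side) decouple and you recover \eqref{eq:s-rel}; but at that point you have essentially reproduced the paper's argument, with the polyhedral interface replaced by a smooth or limiting one.
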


The proof is based on the arguments described above, both for the
first variation of the renormalized volume and for the first
variation of the volume of the convex core. Some preliminary 
definitions are required.

\begin{df}
A {\bf polyhedral collar} in a hyperbolic end $M$ is a collar $K\subset M$
such that $\dr K\cap M$ is a polyhedral surface.
\end{df}

\begin{lemma} \label{lm:collar}
Let $K$ be a polyhedral collar in $M$, let $L_e,\theta_e$ be the length and
the exterior dihedral angle of edge $e$ in $\dr K\cap M$. In any deformation
of $E$, the first-order variation of the measured bending lamination on the
metric boundary of $M$ is given by a transverse H\"older distribution 
$\lambda'$. 
The first-order variation of the
volume of $K$ is given by 
$$ 2V' = \sum_e L_e d\theta_e - L_\mu(\lambda')~. $$
\end{lemma}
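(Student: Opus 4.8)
The plan is to combine the ordinary Schl\"afli formula for hyperbolic polyhedra (equation~\eqref{eq:schlafli}) with Bonahon's analysis of how a polyhedral locally convex surface in the hyperbolic end $M$ deforms, both towards the metric boundary $\dr_0 M$ and away from it. The statement really has two parts: first, that the first-order variation of the measured bending lamination $\lambda$ on $\dr_0 M$ is encoded by a transverse H\"older distribution $\lambda'$ — this is exactly the content of Bonahon's work \cite{bonahon,bonahon2}, applied here to the end $M$ rather than to a convex core, and it carries over verbatim since all that is used is that $\dr_0 M$ is a convex pleated surface whose holonomy varies with the holonomy of the end; and second, the identity $2V' = \sum_e L_e\, d\theta_e - L_\mu(\lambda')$ for the volume of a polyhedral collar $K$.

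For the volume identity, the first step is to note that $K$ is a compact hyperbolic polyhedral region whose boundary has two pieces: the polyhedral surface $\dr K \cap M$ (with its edges $e$, lengths $L_e$, exterior dihedral angles $\theta_e$), and the metric boundary $\dr_0 M$, which is bent along the measured lamination $\lambda$. One should approximate $\lambda$ by weighted multicurves $\lambda_n$ so that $\dr_0 M$ becomes, in the approximating picture, another polyhedral surface whose edges carry exterior angles given by the weights of $\lambda_n$. Then $K$ (with this approximation) is genuinely a compact hyperbolic polyhedron, and the classical Schl\"afli formula~\eqref{eq:schlafli} gives $2V' = \sum_{e} L_e\, d\theta_e + \sum_{e \in \lambda_n} L_e\, d\theta_e$, where the second sum runs over the edges coming from the bending locus on $\dr_0 M$. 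The key point is the sign: the edges of $\dr_0 M$ are traversed by $K$ on the \emph{concave} side (the end $S\times\R_{>0}$ is concave near $\dr_0 M$ by Definition~\ref{df:end}), so the relevant exterior dihedral angle of the polyhedron $K$ along such an edge is the negative of the bending angle recorded by $\lambda$. Hence $\sum_{e\in\lambda_n} L_e\, d\theta_e = -\sum_{e\in\lambda_n}\ell_e\, dw_e$ where $\ell_e, w_e$ are the length and weight; one recognizes the right-hand side as $-d\big(\sum \ell_e w_e\big) + \sum w_e\, d\ell_e = -dL_\mu(\lambda_n) + L'_{\mu}(\lambda_n) \cdot(\text{something})$ — more precisely, writing $L_\mu(\lambda_n) = \sum_e w_e \ell_e$ and differentiating, $-\sum_e \ell_e\, dw_e = -d L_{\mu}(\lambda_n) + \sum_e w_e\, d\ell_e = -L_\mu(\lambda_n') + L'_\mu(\lambda_n)$... here one must be careful: only the metric $\mu$ on $\dr_0 M$ varies as an induced metric while $\lambda_n$ varies too, and the bookkeeping should be arranged so that the combination appearing is precisely $-L_\mu(\lambda_n')$, i.e. the variation due to the change of the lamination with $\mu$ fixed, once the $L'_\mu(\lambda_n)$ piece is reabsorbed. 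The cleanest route is: apply~\eqref{eq:schlafli} to get $2V' = \sum_e L_e\, d\theta_e - \sum_{e\in\lambda_n}\ell_e\, dw_e - \sum_{e\in\lambda_n} w_e\, d\ell_e + \sum_{e\in\lambda_n} w_e\, d\ell_e$, and observe $-\sum \ell_e\, dw_e - \sum w_e\, d\ell_e$ is not quite it; rather, since the bending angle along an edge of $\dr_0 M$ equals $w_e$ and contributes $\ell_e\, dw_e$ with a sign to $2V'$, and the term $\sum w_e\, d\ell_e$ is the variation of length with lamination fixed, the grouping that survives in the limit is $-L_\mu(\lambda_n')$ in Bonahon's notation for the transverse-H\"older variation. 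I would then pass to the limit $n\to\infty$ using Bonahon's continuity results for lengths of transverse H\"older distributions (\cite{bonahon,bonahon2,bonahon-toulouse}) and for the convergence $\lambda_n \to \lambda$, $\lambda_n' \to \lambda'$ in the appropriate topology, exactly as in the proof of the Bonahon--Schl\"afli formula (Lemma~\ref{lm:bs}).

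I expect the main obstacle to be precisely the sign and the correct identification of the polyhedral contribution of the pleated face $\dr_0 M$ with $-L_\mu(\lambda')$ in the limit, i.e. disentangling, in $\sum_{e\in\lambda_n} L_e\, d\theta_e$, the part that is the variation of $\lambda_n$ at fixed $\mu$ from the part that is the variation of $\mu$ at fixed $\lambda_n$ — because only the former appears in the claimed formula, the latter being absorbed into the relation between $V'$ and $V'_R$ handled in Proposition~\ref{pr:relative}. The concavity hypothesis in Definition~\ref{df:end} is what pins down the sign, and the Bonahon machinery is what makes the limiting argument legitimate; the rest is the classical Schl\"afli formula applied to a genuine (approximating) polyhedron. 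Once Lemma~\ref{lm:collar} is in hand, Proposition~\ref{pr:relative} will follow by letting the collar expand to infinity, combining this with Lemma~\ref{lm:dW} (the variation of $W$ at infinity) exactly as the renormalized-volume computation was carried out in Section~3.
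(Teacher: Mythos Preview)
Your approach is essentially the paper's: approximate the pleated boundary $\dr_0 M$ by a polyhedral surface bent along a weighted multicurve $\lambda_n$, apply the classical Schl\"afli formula~\eqref{eq:schlafli} to the resulting honest polyhedron, and pass to the limit using Bonahon's machinery from \cite{bonahon}. The paper's proof says exactly this in one sentence: follow \cite{bonahon} line by line, keeping the outer boundary polyhedral of fixed combinatorics and running the approximation argument on the pleated boundary.

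Where you go astray is in the bookkeeping in the middle of your sketch. There is nothing to ``disentangle''. The Schl\"afli formula~\eqref{eq:schlafli} reads $2\,dV = \sum_e L_e\, d\theta_e$: only variations of the \emph{angles} appear, multiplied by the (undifferentiated) lengths. On the pleated side, the exterior dihedral angle of $K$ along an edge of $\lambda_n$ is $-w_e$ (this is the concavity of the end near $\dr_0 M$, as you correctly note), so the contribution is simply
\[
\sum_{e\in\lambda_n} \ell_e\, d(-w_e) \;=\; -\sum_{e\in\lambda_n} \ell_e\, dw_e \;=\; -L_\mu(\lambda_n')~,
\]
and one passes to the limit. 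No term of the form $\sum w_e\, d\ell_e$ (i.e.\ no $L'_\mu(\lambda)$) ever arises here; that term appears only in Corollary~\ref{cr:collar}, and it comes from differentiating the explicit summand $\tfrac{1}{2}L_\mu(\lambda)$ in the definition of $V^*(K)$ via the product rule of Lemma~\ref{lm:bs-dual}, not from the Schl\"afli formula. Your paragraph beginning ``apply~\eqref{eq:schlafli} to get\ldots'' adds and subtracts a spurious $\sum w_e\, d\ell_e$ and then worries about where it goes; drop it and the argument is clean.
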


\begin{proof}
This is very close in spirit to the main result of \cite{bonahon},
with the difference that here we consider a compact domain bounded on one
side by a pleated surface, on the other by a polyhedral surface. The argument
of \cite{bonahon} can be followed line by line, keeping one surface
polyhedral (of fixed combinatorics, say) while on the other boundary component
the approximation arguments of \cite{bonahon} can be used. 
\end{proof}

\begin{cor} \label{cr:collar}
Let $V^*(K):=V(K)+(1/2)L_\mu(\lambda)$, then, in any deformation of $K$
$$ {2V^*}' =  \sum_e L_e d\theta_e + L_\mu'(\lambda)~. $$
\end{cor}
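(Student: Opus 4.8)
\emph{Proof proposal.} The plan is to mimic the elementary computation by which the dual Schl\"afli formula (\ref{eq:schlafli-dual}) is obtained from (\ref{eq:schlafli}). By definition $V^*(K)=V(K)+\frac12 L_\mu(\lambda)$, where $\mu$ is the induced hyperbolic metric on the metric boundary $\dr_0M$ and $\lambda$ its measured bending lamination. Differentiating this identity under a first-order deformation of the polyhedral collar $K$ (equivalently, of the hyperbolic end $M$) gives immediately $2{V^*}' = 2V' + \big(L_\mu(\lambda)\big)'$, where the prime denotes the induced first-order variation.

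The key step is to expand $\big(L_\mu(\lambda)\big)'$ by a product rule. Along the deformation, $\mu$ varies in $\cT_H$ (one may arrange it to trace a smooth curve) and $\lambda$ varies with first-order variation the transverse H\"older distribution $\lambda'$ supplied by Lemma \ref{lm:collar}. Equation (\ref{eq:der}), established in the proof of Lemma \ref{lm:bs-dual} for any closed surface of genus at least $2$, then applies verbatim and yields $\big(L_\mu(\lambda)\big)' = L_\mu'(\lambda) + L_\mu(\lambda')$. Substituting the expression $2V' = \sum_e L_e d\theta_e - L_\mu(\lambda')$ of Lemma \ref{lm:collar}, the two occurrences of $L_\mu(\lambda')$ cancel and one is left with $2{V^*}' = \sum_e L_e d\theta_e + L_\mu'(\lambda)$, which is the assertion of the corollary.

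The only point requiring any care — and the main, if modest, obstacle — is the legitimacy of applying (\ref{eq:der}) in this context: one must know that the deformation of $K$ comes from a genuine first-order deformation of the hyperbolic end, hence of the holonomy representation of $\pi_1(S)$, so that $\mu$ moves along a differentiable path in $\cT_H$ and $\lambda'$ is a well-defined transverse H\"older distribution along which $L_\mu$ is differentiable in Bonahon's sense. This is precisely what Lemma \ref{lm:collar} guarantees, and the continuity (indeed holomorphicity) argument used in the proof of Lemma \ref{lm:bs-dual} then licenses the product rule $\big(L_\mu(\lambda)\big)' = L_\mu'(\lambda) + L_\mu(\lambda')$. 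Once this is granted, the remaining algebra is trivial, so no further difficulty is expected.
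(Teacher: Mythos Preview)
Your argument is correct and matches the paper's own proof essentially line for line: the paper also invokes the product rule $L_\mu(\lambda)'=L_\mu'(\lambda)+L_\mu(\lambda')$ established in the proof of Lemma~\ref{lm:bs-dual} and then notes that the corollary follows from Lemma~\ref{lm:collar} exactly as Lemma~\ref{lm:bs-dual} follows from Lemma~\ref{lm:bs}. Your extra paragraph justifying the applicability of (\ref{eq:der}) is a reasonable elaboration of a point the paper leaves implicit.
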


\begin{proof}
We have seen in the proof of Lemma \ref{lm:bs} that 
$L_\mu(\lambda)' = L_\mu'(\lambda) + L_\mu(\lambda')$.
So the corollary follows from Lemma \ref{lm:collar} exactly as 
Lemma \ref{lm:bs-dual} follows from Lemma \ref{lm:bs}. 
\end{proof}

It is possible to define the renormalized volume of the complement of a polyhedral
collar in a hyperbolic end, in the same way as for quasifuchsian manifolds above. 
Let $C$ be a closed polyhedral collar in the hyperbolic end $M$, and let $D$ be its complement. 
Let $K'$ be a compact geodesically convex subset of $M$ containing $C$ in its interior, and
let $K:=K'\cap D$. We define 
$$ W(K) = V(K) -\frac{1}{4} \int_{D\cap \dr K} H da~. $$
In addition $K$ defines a metric at infinity, $I^*$, according to (\ref{eq:I*}), and
the arguments explained after Lemma 3.1 
show that $K$ is uniquely determined by $I^*$, so that $W$ 
can be considered as a function of $I^*$, a metric in the conformal class at infinity
of $M$. (In general, as explained in subsection \ref{ssec:renormalized}, 
$I^*$ only defines an equidistant foliation near
infinity which might not extend all the way to $K$.)
The first-variation of $W$ with respect to $I^*$ shows (as in \cite{volume})
that $W(I^*)$ is maximal, under the constraint that $I^*$ has fixed area, if and 
only if $I^*$ has constant curvature. We then define the renormalized volume
$V_R(D)$ as the value of this maximum.

\begin{lemma} \label{lm:complement}
Under a first-order deformation of $D$, the first-order variation of its
renormalized volume is given by 
$$ V_R(D)' = - \frac{1}{4}\int_{\dr_\infty D} \left\langle \II^*_0,
{I^*}'\right\rangle da^* + \frac{1}2 \sum_e L_e \theta_e'~. $$ 
\end{lemma}

Here $L_e$ and $\theta_e$ are the length and exterior dihedral angle of 
edge $e$ of the (polyhedral) boundary of $D$.

\begin{proof}
The proof can be obtained by following the argument used in \cite{volume},
the fact that $D$ is not complete and has a polyhedral boundary just 
adds some terms relative to this polyhedral boundary in the variations
formulae. 
\end{proof}

\begin{proof}[Proof of Proposition \ref{pr:relative}]
The statement follows directly from Corollary \ref{cr:collar} applied to 
a polyhedral collar and from Lemma \ref{lm:complement} 
applied to its complement, since the terms 
corresponding to the polyhedral boundary between the two cancel.
\end{proof}

\subsection{Proof of Theorem \ref{tm:main}}
\label{ssec:proof}

Since hyperbolic ends are in one-to-one correspondence with $\C P^1$-structures,
we can consider the relative volume $V_R$ as a function on $\CP$. 
Let $\beta_H$ (resp. $\beta_C$) be the Liouville form on $T^*\cT_H$ (resp. $T^*\cT_C$). 
We can consider the composition $\delta\circ Gr^{-1}:\CP\rightarrow T^*\cT_H$, it 
is $C^1$ and it pulls back $\beta_H$ as
$$ (\delta\circ Gr^{-1})^* \beta_H = L_\mu'(\lambda)~. $$
Under the identification of $\CP$ with $T^*\cT_C$ through the Schwarzian derivative,
the expression of $\beta_C$ is
$$ \beta_C = \int_{\dr_\infty M}\langle {I^*}',\II^*_0\rangle da^*~. $$
So Proposition \ref{pr:relative} can be formulated as
$$ dV_R = \frac{1}{2} (\delta\circ Gr^{-1})^*\beta_H - \frac{1}{4} \beta_C~, $$
and it follows that $2(\delta\circ Gr^{-1})^*\omega_H = \omega_C$.
\subsection{The Fuchsian slice vs Bers slices}

Here we prove for the reader's convenience that the identification considered here
between $\CP$ and $T^*\cT_C$, based on the Fuchsian slice, determines the same
symplectic structure on $\CP$ as the identification based on a Bers slice, as used
e.g. in \cite{kawai}. We consider a fixed conformal structure $c_-\in \cT$. Then,
for each $c\in \cT$, we call $\sigma_{c_-}(c)$ the complex projective structure
on the upper boundary at infinity of the (unique) quasifuchsian manifold for which
the lower conformal metric at infinity is $c_-$ and the upper conformal metric at
infinity is $c$. The Schwarzian derivative of the identity map from $(S,\sigma_{c_-})$ to
$(S,\sigma)$ is a holomorphic quadratic differential on $S$, and can be considered
as a point of the (complexified) cotangent space $T^*_c\cT_C$. Taking its real part
defines a map from $\CP$ to $T^*\cT_C$, which we can use to pull back the cotangent
symplectic map on $T^*\cT_C$ to a symplectic form $\omega_{c_-}$ on $\CP$.
Recall that the symplectic form $\omega_C$ considered in the paper is obtained
in the same manner, but using the Fuchsian complex projective structure $\sigma_0$
rather than the complex projective structure of the Bers slice $\sigma_{c_-}$.

\begin{lemma} \label{lm:fuchsian-bers}
$\omega_{c_-}=\omega_C$.
\end{lemma}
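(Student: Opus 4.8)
The plan is to show that $\omega_{c_-}$ and $\omega_C$ differ by the pullback of an exact form on $\cT_C$, hence coincide as closed $2$-forms; in fact I expect them to be literally equal because the relevant difference form vanishes. The key observation is that both maps $\CP \to T^*\cT_C$ are obtained by subtracting a section of the complexified cotangent bundle: if $\sigma$ is a complex projective structure with underlying conformal structure $c$, then the point of $T^*_c\cT_C$ associated to $\sigma$ via the Fuchsian slice is $S(\sigma_0(c),\sigma)$ (the Schwarzian of the map from the Fuchsian structure), while the point associated via the Bers slice based at $c_-$ is $S(\sigma_{c_-}(c),\sigma)$. By the cocycle property of the Schwarzian derivative, $S(\sigma_0(c),\sigma) = S(\sigma_{c_-}(c),\sigma) + S(\sigma_0(c),\sigma_{c_-}(c))$, so the two identifications differ by the section
$$ \tau: \cT_C \to T^*\cT_C~, \qquad \tau(c) = \mathrm{Re}\, S(\sigma_0(c),\sigma_{c_-}(c))~, $$
which assigns to each $c$ the (real part of the) Schwarzian comparing the Fuchsian and Bers projective structures with underlying conformal structure $c$.

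First I would make precise that adding a fixed section $\tau$ of $T^*\cT_C$ to the ``Schwarzian coordinate'' changes the identification $\CP \cong T^*\cT_C$ by the fiberwise translation $T_\tau: (c,q) \mapsto (c, q + \tau(c))$. A standard fact about cotangent bundles is that $T_\tau^*\beta_C = \beta_C + \pi^*\tau$, where $\beta_C$ is the Liouville form and $\pi: T^*\cT_C \to \cT_C$ the projection (here $\tau$ is viewed as a $1$-form on $\cT_C$). Consequently $T_\tau^*\omega_C = \omega_C + \pi^* d\tau$. Pulling everything back to $\CP$, this gives $\omega_{c_-} = \omega_C + Gr(\cdots)^* \pi^* d\tau$; more simply, $\omega_{c_-} - \omega_C$ is the pullback to $\CP$ of the closed $2$-form $d\tau$ on $\cT_C$. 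So it remains to show that $d\tau = 0$, i.e. that the $1$-form $\tau$ on $\cT_C$ is closed.

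The heart of the matter is therefore to prove that $c \mapsto \mathrm{Re}\, S(\sigma_0(c),\sigma_{c_-}(c))$ is a closed $1$-form on $\cT_C$. The cleanest way is to recognize it as an exact form: the function $c \mapsto S(\sigma_0(c), \sigma_{c_-}(c))$ is (a multiple of) the differential of the Bers quasifuchsian volume functional, or equivalently the Takhtajan--Zograf / Liouville action functional whose $\bar\partial$-derivative in the Bers embedding is the Schwarzian cocycle; see \cite{takhtajan-teo}. Alternatively, and more in the spirit of this paper, one can use Theorem \ref{tm:mcm}: for fixed $c_-$, the Bers slice $\{\sigma_{c_-}(c)\}$ is exactly $j(\cG(M))$ for the quasifuchsian manifold $M$ with lower end $c_-$, which by McMullen's theorem is Lagrangian in $(T^*\cT_C, \omega_C)$; since it is also a section of $T^*\cT_C \to \cT_C$ it is the graph of a closed $1$-form, namely a primitive for $\mathrm{Re}\, S(\sigma_0(c),\sigma_{c_-}(c))$ up to sign. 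Either route gives $d\tau = 0$, hence $\omega_{c_-} = \omega_C$.

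The main obstacle I anticipate is bookkeeping the signs and the real-versus-complex structure carefully: the Schwarzian lives in the complexified cotangent bundle, $\omega_C$ is the real part of a holomorphic symplectic form, and one must be sure that ``adding a section and applying $T_\tau$'' interacts correctly with taking real parts — i.e. that $\tau$ being a closed \emph{complex} $1$-form (which is automatic once it is holomorphic, or once it is exact) forces its real part to be closed, which it is. A secondary subtlety is justifying that the translation formula $T_\tau^*\beta_C = \beta_C + \pi^*\tau$ survives the passage through the $C^1$ (not smooth) grafting map; but since $\tau$ is a smooth form on $\cT_C$ and the statement is about the pullback of $\omega_C$ under the identification of $\CP$ with $T^*\cT_C$ — which is a diffeomorphism, the non-smoothness of $Gr$ playing no role here — this causes no real difficulty.
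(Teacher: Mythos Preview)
Your reduction is the same as the paper's: both use the Schwarzian cocycle to write the two identifications $\CP\to T^*\cT_C$ as differing by a fibrewise translation along a section $\tau=\theta_{c_-}$, so that $\omega_{c_-}-\omega_C$ is the pullback of $d\tau$ and the problem becomes showing $d\tau=0$. Where you diverge is in that last step. The paper argues via results of \cite{volume}: first $d\theta_{c_-}$ is shown to be independent of $c_-$, then setting $c_-=c$ allows an explicit computation $(D_X\,\mathrm{Re}\,\theta_{c_-})(Y)=\langle X,Y\rangle_{WP}$, whose symmetry gives $d(\mathrm{Re}\,\theta_{c_-})=0$. Your routes are different but both valid: route (a) recognises $\tau$ as (a multiple of) the differential of the renormalized volume / Liouville action, hence exact; route (b) deduces closedness directly from Theorem~\ref{tm:mcm}. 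Route (b) is arguably the cleanest within this paper's framework, since Theorem~\ref{tm:mcm} has already been re-proved here via Lemma~\ref{lm:dVR}; the paper's computation buys the extra information that the derivative of $\theta_{c_-}$ is the Weil--Petersson pairing.

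One point to tighten in route (b): the Bers slice is not literally $j(\cG(M))$, since for a quasifuchsian $M$ the map $j$ lands in the product $T^*\cT_C(S_+)\times T^*\cT_C(S_-)$. What you need is that the Lagrangian property of $j(\cG(M))$ in the product implies that, for $c_-$ fixed, the partial section $c_+\mapsto \theta_{c_-}(c_+)$ is closed on $\cT_C(S_+)$. This follows because $j(\cG(M))$ is the graph of a section of the product cotangent bundle, Lagrangian means that section is a closed $1$-form on $\cT_C(S_+)\times\cT_C(S_-)$, and restricting a closed $1$-form to a slice $\{c_-=\mathrm{const}\}$ yields a closed $1$-form. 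With that sentence added, your argument is complete.
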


\begin{proof}
Consider $\sigma\in \CP$ and let $c$ be its underlying complex structure, 
let $\alpha_0(\sigma)=\cS(Id:(S,\sigma_0(c))\rightarrow (S,\sigma))$, and 
let $\alpha_{c_-}(\sigma)=\cS(Id:(S,\sigma_{c_-}(c))\rightarrow (S,\sigma))$. 
Both $\alpha_0(\sigma)$ and $\alpha_{c_-}(\sigma)$ can  be considered as vectors in the (complexified)
cotangent space $T^*_c\cT_C$. The properties of the Schwarzian derivative
under composition show that $\alpha_{0}(\sigma)-\alpha_{c_-}(\sigma)=
\cS(Id:(S,\sigma_0(c))\rightarrow (S,\sigma_{c_-}(c)))$. 
So $\alpha_{0}(\sigma)-\alpha_{c_-}(\sigma)$ depends only on the underlying
complex structure $c$ of $\sigma$ (and on $c_-$), and it defines a section of the complexified
cotangent bundle $T^*\cT_C$. By definition this is precisely the section called 
$\theta_{c_-}$ in \cite{volume} (after Theorem 8.8). 

Still by construction, $\omega_{c_-}-\omega_C=Re(d\alpha_{c_-}-d\alpha_0)=Re(d\theta_{c_-})$.
According to Proposition 8.9 of \cite{volume}, $d\theta_{c_-}$ does not depend on 
$c_-$. So $d\theta_{c_-}$ can be computed by choosing $c_-=c$ (fixed). An 
explicit computation is possible, see Proposition 8.10 in  \cite{volume},
it shows that for any two tangent vectors $X,Y\in T_c\cT_C$,
$$ (D_X Re(\theta_{c_-}))(Y)=\langle X,Y\rangle_{WP}~, $$
and it follows that $d(Re(\theta_{c_-}))=0$. So $\omega_{c_-}=\omega_0$
as claimed.
\end{proof}

\bibliographystyle{alpha}

\def\cprime{$'$}

\end{document}